\documentclass{amsart}
\usepackage{mathptmx}
\usepackage{amssymb}
\usepackage{amsfonts}
\usepackage{amsmath}
\usepackage{graphicx}
\usepackage{shadow}
\usepackage[all]{xy}
\usepackage[pagebackref]{hyperref}
\newtheorem{thm}{Theorem}[section]
\newtheorem{corollary}[thm]{Corollary}
\newtheorem{lemma}[thm]{Lemma}
\newtheorem{proposition}[thm]{Proposition}

\theoremstyle{definition}

\newtheorem{example}[thm]{Example}

\theoremstyle{remark}
\newtheorem{remark}[thm]{Remark}
\newtheorem{claim}{Claim}
\newcommand{\field}[1]{\mathbb{#1}}

\newcommand{\Z }{\field{Z}}
\newcommand{\N }{\field{N}}

\DeclareMathOperator{\htt}{ht}

\DeclareMathOperator{\Spec}{Spec}
\DeclareMathOperator{\Max}{Max}

\DeclareMathOperator{\gr}{gr}
\DeclareMathOperator{\G}{G}

\begin{document}

\title[On the prime ideal structure of symbolic Rees algebras]{On the prime ideal structure of symbolic Rees algebras}

\author{S. Bouchiba}
\address{Department of Mathematics, University of Meknes, Meknes 50000, Morocco}
\email{bouchiba@caramail.com}

\author{S. Kabbaj}
\address{Department of Mathematics and Statistics, KFUPM, Dhahran 31261, KSA}
\email{kabbaj@kfupm.edu.sa}
\thanks{Supported by KFUPM under Research Grant \# MS/DOMAIN/369.}

\date{\today}
\subjclass[2000]{13C15, 13F05, 13F15, 13E05, 13F20, 13G05, 13B25, 13B30}
\keywords{Symbolic Rees algebra, associated graded ring, subalgebra of an affine domain, Krull dimension, valuative dimension,
Jaffard domain, Krull domain, factorial domain, fourteenth problem of Hilbert}

\begin{abstract}
This paper contributes to the study of the prime spectrum and dimension theory of symbolic Rees algebra over Noetherian domains. We first establish some general results on the prime ideal structure of subalgebras of affine domains, which actually arise, in the Noetherian context, as domains between a domain $A$ and $A[a^{-1}]$. We then examine closely the special context of symbolic Rees algebras (which yielded the first counter-example to the Zariski-Hilbert problem). One of the results states that if $A$ is a Noetherian domain and $p$ a maximal ideal of $A$, then the Rees algebra of $p$ inherits the Noetherian-like behavior of being a stably strong S-domain. We also investigate graded rings associated with symbolic Rees algebras of prime ideals $p$ such that $A_{p}$ is a rank-one DVR and close with an application related to Hochster's result on the coincidence of the ordinary and symbolic powers of a prime ideal.
\end{abstract}
\maketitle

\begin{section}{Introduction}
All rings considered in this paper are integral domains and all ring homomorphisms are unital. Examples of finite-dimensional non-Noetherian Krull (or factorial) domains are scarce in the literature. One of these stems from the generalized fourteenth problem of Hilbert (also called Zariski-Hilbert problem). Let $k$ be a field of characteristic zero and let $T$ be a normal affine domain over $k$. Let $F$ be a subfield of the field of fractions of $T$. Set $R:=F\cap T$. The Hilbert-Zariski problem asks whether $R$ is an affine domain over $k$. Counterexamples on this problem were constructed by Rees \cite{Re}, Nagata \cite{Na1} and Roberts \cite{R1,R2}. In 1958, Rees constructed the first counter-example giving rise to (what is now called) Rees algebras. In 1970, based on Rees' work, Eakin and Heinzer constructed in \cite{EH} a first example of a 3-dimensional non-Noetherian Krull domain which arose as a symbolic Rees algebra. In 1973, Hochster studied in \cite{H} criteria for the ordinary and symbolic powers of a prime ideal to coincide (i.e., the Rees and symbolic Rees algebras are equal) within Noetherian contexts. Since then, these special graded algebras has been capturing the interest of many commutative algebraists and geometers.

In this line, Anderson, Dobbs, Eakin, and Heinzer \cite{ADEH} asked whether $R$ and its localizations inherit from $T$ the Noetherian-like main behavior of having Krull and valuative dimensions coincide  (i.e., $R$ is a Jaffard domain). This can be viewed in the larger context of Bouvier's conjecture about whether finite-dimensional non-Noetherian Krull domains are Jaffard \cite{BK,FK}. In \cite{BK2}, we showed that while most examples existing in the literature are (locally) Jaffard, the question about those arising as symbolic Rees algebras is still open. This lies behind our motivation to contribute to the study of the prime ideal structure of this construction. We examine contexts where it inherits the (locally) Jaffard property and hence compute its Krull and valuative dimensions.

A finite-dimensional domain $R$ is said to be Jaffard if $\dim(R[X_{1}, \cdots , X_{n}])= n + \dim(R)$ for all $n\geq 1$ or, equivalently, if $\dim(R) = \dim_{v}(R)$, where $\dim(R)$ denotes the (Krull) dimension of $R$ and $\dim_{v}(R)$ its valuative dimension (i.e., the supremum of dimensions of the valuation overrings of $R$). As this notion does not carry over to localizations, $R$ is said to be locally Jaffard if $R_p$ is a Jaffard domain for each prime ideal $p$ of $R$ (equiv., $S^{-1}R$ is a Jaffard domain for each multiplicative subset $S$ of $R$).

In order to study Noetherian domains and Pr\"{u}fer domains in a unified manner, Kaplansky \cite{Ka} introduced the
notions of S-domain and strong S-ring. A domain $R$ is called an S-domain if, for each height-one prime ideal $p$ of $R$, the extension $p[X]$ to the polynomial ring in one variable also has height $1$. A ring $R$ is said to be a strong S-ring if $\frac{R}{p}$ is an S-domain for each $p\in \Spec(R)$. While $R[X]$ is always an S-domain for any domain $R$ \cite{FK90}, $R[X]$ need not be a strong S-ring even when $R$ is a strong S-ring. Thus, $R$ is said to be a stably  (or universally) strong S-ring if the polynomial ring $R[X_1,\cdots ,X_n]$ is a strong S-ring for each positive integer $n$ \cite{K1,K2,MM}. A stably  strong S-domain is locally Jaffard \cite{ABDFK,K1}. An example of a strong S-domain which is not a stably strong S-domain was constructed in \cite{BMRH}. We assume familiarity with these concepts, as in \cite{ABDFK,ADKM,BDF,BK,DFK,J,K1,K2,MM}.

In Figure~\ref{D}, a diagram of implications indicates how the classes of Noetherian domains, Pr\"ufer domains, UFDs, Krull domains, and PVMDs \cite{G1} interact with the notion of Jaffard domain as well as with the S-properties.\bigskip

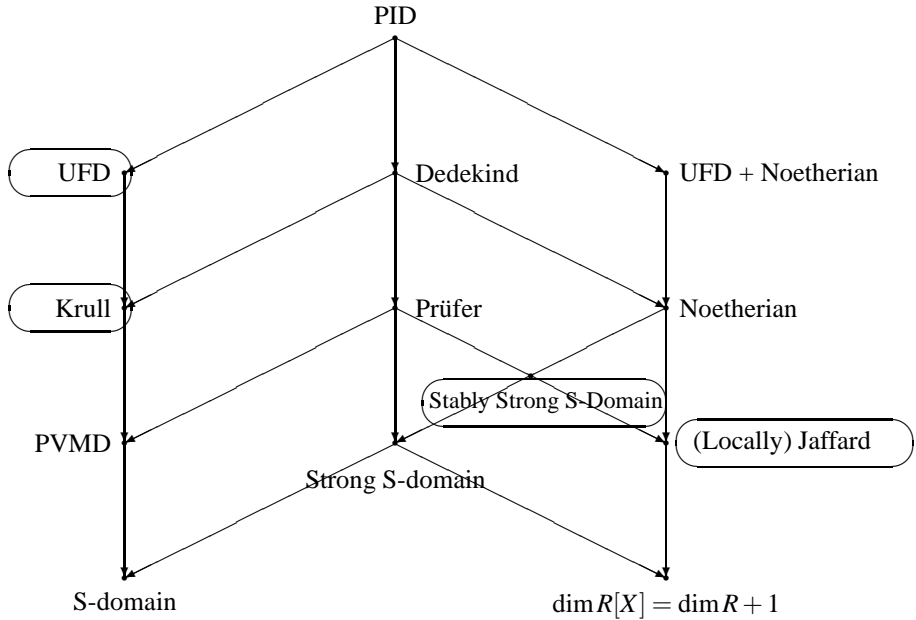
\begin{figure}[h!]
\[\setlength{\unitlength}{.9mm}
\begin{picture}(100,80)(0,0)
\put(40,80){\vector(0,-2){20}} \put(40,80){\vector(-2,-1){40}}
\put(40,80){\vector(2,-1){40}} \put(0,60){\vector(0,-2){20}}
\put(40,60){\vector(-2,-1){40}} \put(40,60){\vector(0,-2){20}}
\put(40,60){\vector(2,-1){40}} \put(80,60){\vector(0,-2){20}}
\put(80,20){\vector(0,-2){20}} \put(40,40){\vector(-2,-1){40}}
\put(40,40){\vector(2,-1){40}} \put(40,40){\vector(0,-2){20}}
\put(0,20){\vector(0,-2){20}} \put(80,40){\vector(0,-2){20}}
\put(80,40){\vector(-2,-1){40}} \put(0,40){\vector(0,-2){20}}
\put(40,20){\vector(-2,-1){40}} \put(40,20){\vector(2,-1){40}}

\put(-8,40){\oval(18,7)} \put(-8,60){\oval(18,7)}
\put(62,26){\oval(36,7)} \put(99,20){\oval(35,7)}

\put(40,80){\circle*{.7}} \put(40,82){\makebox(0,0)[b]{PID}}
\put(0,60){\circle*{.7}} \put(-2,60){\makebox(0,0)[r]{UFD}}
\put(0,40){\circle*{.7}} \put(-2,40){\makebox(0,0)[r]{Krull}}
\put(0,20){\circle*{.7}} \put(-2,20){\makebox(0,0)[r]{PVMD}}
\put(40,60){\circle*{.7}} \put(43,60){\makebox(0,0)[l]{Dedekind}}
\put(40,40){\circle*{.7}} \put(43,40){\makebox(0,0)[l]{Pr\"ufer}}
\put(40,20){\circle*{.7}} \put(40,16){\makebox(0,0)[t]{Strong
S-domain}}
\put(80,20){\circle*{.7}}\put(84,20){\makebox(0,0)[l]{(Locally)
Jaffard}}
\put(60,30){\circle*{.7}}\put(45,26){\makebox(0,0)[l]{\small Stably
Strong S-Domain}}
\put(80,0){\circle*{.7}}\put(80,-2){\makebox(0,0)[t]{$\dim R[X]
=\dim R + 1$}}
\put(0,0){\circle*{.7}}\put(0,-2){\makebox(0,0)[t]{S-domain}}
\put(80,40){\circle*{.7}} \put(82,40){\makebox(0,0)[l]{Noetherian}}
\put(80,60){\circle*{.7}} \put(82,60){\makebox(0,0)[l]{UFD +
Noetherian}}
\end{picture}\]\medskip

\caption{\tt Diagram of Implications} \label{D}
\end{figure}

Section~\ref{sec:2} of this paper provides some general results on the prime ideal structure of subalgebras $R$ of affine domains over a domain $A$, which actually turn out to equal the graded ring $\sum_{n\geq 0}a^{-n}{I_n}$ for some $a\in A$ and $a$-filtration $(I_n)_{n\geq 0}$ of $A$. In particular, we prove that $p\in \Spec(A)$ with $a\in p$ is the contraction of a prime ideal of $R$ if and only if $a^{-n}pI_{n}\cap A=p$ for each $n$. Moreover, if any one condition holds, then $p\supseteq I_{1}\supseteq I_{2}, \cdots$ (Corollary~\ref{sec:2.4}). Section~\ref{sec:3} examines closely the special construction of symbolic Rees algebra. One of the main results (Theorem~\ref{sec:3.1}) reveals the fact that three or more prime ideals of the symbolic Rees algebra may contract on the same prime ideal in the base ring. Also we show that if $A$ is a Noetherian domain and $p$ a maximal ideal of $A$, then the Rees algebra of $p$ is a stably  strong S-domain hence locally Jaffard (Theorem~\ref{sec:3.4}). Section~\ref{sec:4} investigates the dimension of graded rings associated with symbolic Rees algebras of prime ideals $p$ such that $A_{p}$ is a rank-one DVR and closes with an application related to Hochster's study of criteria that force the coincidence of the ordinary and symbolic powers of a prime ideal.
\end{section}

\begin{section}{The general context}\label{sec:2}

Recall that an affine domain over a ring $A$ is a finitely generated $A$-algebra that is a domain \cite[p. 127]{Na2}. In light of the developments described in \cite[Section 3]{BK2}, in order to investigate the prime ideal structure of subalgebras of affine domains over a Noetherian domain, we are reduced to those domains $R$ between a Noetherian domain $A$ and its localization $A[a^{-1}]$ for a nonzero element $a$ of $A$. For this purpose, we use the language of filtrations.

From \cite{At,Ma}, a filtration of a ring $A$ is a descending chain $(I_n)_n$ of ideals of $A$ such that $A=I_0\supseteq
I_1\supseteq\cdots \supseteq I_n\supseteq\cdots $ and
 $I_nI_m\subseteq I_{n+m}$ for all $n,m$. The associated graded ring of $A$ with respect to the filtration $(I_n)_n$
is given by $\gr(A):=\bigoplus_n \frac
{I_n}{I_{n+1}}$. The filtration $(I_n)_n$ is
said to be an $I$-filtration, for a given ideal $I$ of $A$, if
$II_n\subseteq I_{n+1}$ for each integer $n\geq 0$.

Let $A$ be a domain and $a$ a nonzero element of $A$. Let $(I_n)_n$ be an $a$-filtration of $A$ and $R:=A + a^{-1}{I_1} + a^{-2}{I_2} + \cdots  =\sum_{n\geq 0}a^{-n}{I_n}$. Clearly, $R$ is a domain,  which is an ascending union of the fractional ideals $(a^{-n}{I_n})_{n}$, such that $A\subseteq R\subseteq A[a^{-1}]$. The converse is also true as shown below.

\begin{lemma}\label{sec:2.1}
Let $A$ be a domain and $a\not=0\in A$. Then $R$ is a domain such that
$A\subseteq R\subseteq A[a^{-1}]$ if and only if $R:=A + a^{-1}{I_1} + a^{-2}{I_2} + \cdots  =\sum_{n\geq 0}a^{-n}{I_n}$ for some $a$-filtration $(I_n)_{n\geq 0}$ of $A$.
\end{lemma}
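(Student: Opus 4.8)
The proof is a straightforward equivalence, and only one direction requires an argument since the displayed discussion preceding the statement already establishes that any $R$ of the form $\sum_{n\geq 0}a^{-n}I_n$ for an $a$-filtration $(I_n)_n$ is a domain sandwiched between $A$ and $A[a^{-1}]$. So the plan is to prove the forward implication: given a domain $R$ with $A\subseteq R\subseteq A[a^{-1}]$, produce an $a$-filtration $(I_n)_n$ of $A$ with $R=\sum_{n\geq 0}a^{-n}I_n$.

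The natural definition is to set $I_n:=a^nR\cap A$ for each $n\geq 0$; equivalently, $I_n=\{x\in A:\ a^{-n}x\in R\}$. First I would check that each $I_n$ is an ideal of $A$ (immediate, as it is the contraction to $A$ of the $R$-ideal $a^nR$, or directly from the definition), that $I_0=A$ since $1\in R$, and that the chain is descending: if $x\in I_{n+1}$ then $a^{-(n+1)}x\in R$, hence $a^{-n}x=a\cdot a^{-(n+1)}x\in R$ (because $a\in A\subseteq R$), so $x\in I_n$. Next, the $a$-filtration condition $aI_n\subseteq I_{n+1}$: if $x\in I_n$ then $a^{-n}x\in R$, so $a^{-(n+1)}(ax)=a^{-n}x\in R$, i.e. $ax\in I_{n+1}$. (The same computation, with $I_nI_m\subseteq I_{n+m}$, follows from $a^{-n}x\cdot a^{-m}y=a^{-(n+m)}(xy)\in R$ when $a^{-n}x,a^{-m}y\in R$; this is not strictly needed since an $a$-filtration of a domain automatically satisfies $I_nI_m\subseteq I_{n+m}$ once we observe $I_n\supseteq a^nI_0\cdot$ — but I would record it for cleanliness, or simply cite that $a$-filtrations in the sense used here are filtrations.)

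It then remains to identify $R$ with $\sum_{n\geq 0}a^{-n}I_n$. The inclusion $\sum_{n\geq 0}a^{-n}I_n\subseteq R$ is clear: for $x\in I_n$ we have $a^{-n}x\in R$ by definition of $I_n$, and $R$ is a ring, so finite sums of such elements lie in $R$. For the reverse inclusion, take $r\in R\subseteq A[a^{-1}]$ and write $r=a^{-N}b$ with $b\in A$ and $N\geq 0$ (possible since $A[a^{-1}]$ is the localization of $A$ at the powers of $a$). Then $b=a^N r\in a^NR\cap A=I_N$, so $r=a^{-N}b\in a^{-N}I_N\subseteq\sum_{n\geq 0}a^{-n}I_n$. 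This closes the equivalence.

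There is essentially no serious obstacle here; the only mild subtlety worth stating carefully is that one should use $a^nR\cap A$ rather than, say, $a^nR$ itself, so that the $I_n$ genuinely are ideals \emph{of $A$}, and that the representation $r=a^{-N}b$ must range over all sufficiently large $N$ — which is fine because multiplying numerator and denominator by powers of $a$ shows $b\in I_N$ is consistent across the chain via the descending property already verified. I would also remark, as the excerpt does, that $R=\bigcup_n a^{-n}I_n$ is an ascending union of the fractional ideals $a^{-n}I_n$, which is exactly the content of the final paragraph of the surjectivity argument.
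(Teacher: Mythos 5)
Your proof is correct and follows essentially the same route as the paper: you define $I_n:=\{x\in A\mid a^{-n}x\in R\}=a^nR\cap A$ and verify the descending-chain and $aI_n\subseteq I_{n+1}$ conditions exactly as the paper does. You are in fact slightly more complete, since you also record $I_nI_m\subseteq I_{n+m}$ (which is genuinely needed for the filtration axiom and not automatic from $aI_n\subseteq I_{n+1}$) and spell out both inclusions of the equality $R=\sum_{n\geq 0}a^{-n}I_n$, steps the paper leaves implicit.
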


\begin{proof}
We only need to prove necessity. Let $I_n:=\{x\in A \mid {\frac
x{a^n}}\in R\}$ for each positive integer $n$. It is fairly easy to
see that $I_n$ is an ideal of $A$ for each integer $n$. Now,
let $x\in I_{n+1}$. Then $ {\frac x{a^{n+1}}}\in R$, so
that $a {\frac x{a^{n+1}}}=  {\frac
x{a^n}}\in R$. Thus $x\in I_n$. Also, observe that $aI_n\subseteq
I_{n+1}$ for each $n$. It follows that $(I_n)_n$ is an
$a$-filtration of $A$, as desired.
\end{proof}

\begin{lemma}\label{sec:2.2}
Let $A$ be a domain, $a\not=0\in A$, and $(I_n)_{n\geq 0}$ an
$a$-filtration of $A$. Let $R:=\sum_{n\geq 0}a^{-n}{I_n}$. Then the prime ideals of $R$ which don't contain $a$ are in one-to-one correspondence with the prime ideals of $A$ which don't contain $a$.
\end{lemma}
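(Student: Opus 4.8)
The plan is to exploit the fact that inverting $a$ turns $R$ into $A[a^{-1}]$. Indeed, since each $I_n$ is an ideal of $A$, we have $a^{-n}I_n \subseteq A[a^{-1}]$, so $R \subseteq A[a^{-1}]$; conversely $1 \in A = I_0$ gives $A \subseteq R$, and multiplying $R$ by powers of $a$ recovers all of $A[a^{-1}]$. Hence $R[a^{-1}] = A[a^{-1}]$, because any element of $A[a^{-1}]$ is of the form $x/a^m$ with $x \in A \subseteq R$, and conversely $R[a^{-1}] \subseteq A[a^{-1}][a^{-1}] = A[a^{-1}]$. This is the one identity doing all the work.

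Next I would invoke the standard correspondence between prime ideals of a ring and prime ideals of one of its localizations. Let $S := \{1, a, a^2, \dots\}$, a multiplicative subset of $R$ (and of $A$). Localization gives an order-preserving bijection between the primes of $R$ disjoint from $S$ — equivalently, the primes of $R$ not containing $a$ — and the primes of $R[a^{-1}]$, and likewise between the primes of $A$ not containing $a$ and the primes of $A[a^{-1}]$. Since $R[a^{-1}] = A[a^{-1}]$ by the previous step, composing these two bijections yields the desired one-to-one correspondence. Concretely, the map sends a prime $P$ of $R$ with $a \notin P$ to $P \cap A$, with inverse sending a prime $\mathfrak{q}$ of $A$ with $a \notin \mathfrak{q}$ to $\mathfrak{q}A[a^{-1}] \cap R$; one should note in passing that $P = P[a^{-1}] \cap R$ and $\mathfrak{q} = \mathfrak{q}[a^{-1}] \cap A$ for such primes, which is exactly the content of the localization correspondence, and that $a \notin P$ forces $a \notin P \cap A$ (and the bijection respects this since $P \cap A$ extended back must still miss $a$).

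There is no serious obstacle here; the only point requiring a line of care is verifying that the composite map is genuinely $P \mapsto P \cap A$ rather than $P \mapsto P \cap A[a^{-1}] \cap A$ or something that looks different — but these coincide because $A \subseteq R$, so $P[a^{-1}] \cap A = (P[a^{-1}] \cap R) \cap A = P \cap A$. Thus I would write the proof as: (i) observe $R[a^{-1}] = A[a^{-1}]$; (ii) apply the prime correspondence for the localization $R \to R[a^{-1}]$; (iii) apply it for $A \to A[a^{-1}]$; (iv) splice the two bijections along the equality in (i), and unwind to see the resulting map is contraction to $A$. The whole argument is a few lines once $R[a^{-1}] = A[a^{-1}]$ is recorded.
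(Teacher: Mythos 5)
Your proposal is correct and is essentially the paper's own argument: both rest on the single observation that inverting the powers of $a$ gives $R[a^{-1}]=A[a^{-1}]$ (the paper writes this as $S^{-1}A=S^{-1}R$ with $S=\{a^n\}$) and then splice the standard localization correspondences for $R$ and for $A$. The paper merely adds the explicit formula $P=p[a^{-1}]\cap R=\sum_{n\geq 0}a^{-n}(p\cap I_n)$ for the prime lying over $p$, which your unwinding step recovers implicitly.
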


\begin{proof}
This follows from the fact that $S^{-1}A=S^{-1}R$, where $S$ is the multiplicatively closed subset of $A$ defined by $S:=\{a^{n} \mid n\in\N\}$. Moreover, if $P\in\Spec(R)$ with $a\notin P$ and $p:=P\cap A$, then
$$P=S^{-1}p\cap R=p[a^{-1}]\cap R=\displaystyle\sum_{n\geq 0}a^{-n}(p\cap I_n).$$
\end{proof}

The question which naturally arises is under what conditions a chain $q\subset p$ in $\Spec(A)$ with $a\in p\smallsetminus q$ lifts to a chain in $\Spec(R)$. This is handled by the main result of this section.

\begin{thm}\label{sec:2.3}
Let $A$ be a domain, $a\not=0\in A$, $(I_n)_{n\geq 0}$ an
$a$-filtration of $A$, and $R:=\sum_{n\geq 0}a^{-n}{I_n}$. Let $q\subset p\in\Spec(A)$ such that $a\in p\smallsetminus q$ and let $Q:=q[a^{-1}]\cap R$. Then the following assertions are equivalent:
\begin{enumerate}
\item There exists $P\in\Spec(R)$ such that $Q\subset P$ and $P\cap A=p$;
\item $a^{-n}(pI_{n} + q\cap I_{n})\cap A=p$, for each $n\geq0$.
\end{enumerate}
\end{thm}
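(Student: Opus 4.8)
The plan is to work in the localization $A[a^{-1}] = R[a^{-1}]$, where $Q = q[a^{-1}] \cap R$ is the contraction of the prime $q[a^{-1}]$, and to characterize when $p$ lies in the image of $\Spec(R/Q) \to \Spec(A/(q\cap A))$ via a going-up / lying-over argument combined with an explicit computation of the relevant ideal of $R$. For the implication $(1)\Rightarrow(2)$, suppose $P\in\Spec(R)$ with $Q\subset P$ and $P\cap A = p$. Since $q[a^{-1}]\cap R = Q \subseteq P$, the contraction $P$ contains every element of $R$ of the form $a^{-n}x$ with $x\in q$ (more precisely $x\in q\cap I_n$), and since $P$ is prime with $p = P\cap A$, it contains $a^{-n}y$ for every $y\in pI_n$ as well (because $pI_n \subseteq p \subseteq P$ and $a^{-n}(pI_n)\subseteq R$). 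Hence $P \supseteq a^{-n}(pI_n + q\cap I_n)$ for each $n$, and intersecting with $A$ gives $a^{-n}(pI_n + q\cap I_n)\cap A \subseteq P\cap A = p$; the reverse inclusion is immediate since $a^n\cdot p \subseteq pI_n$ would require $a^n\in I_n$, which need not hold, so one argues instead that any $z$ in this intersection lies in $P\cap A = p$, and conversely one checks $p$ itself is contained in the intersection by noting $a^{-n}(a^n p) = p$ and $a^n p \subseteq pI_0\cdot$(something)—here care is needed because $a\in p$, so $a^n p \subseteq pI_n$ precisely when $a^n \in I_n$; the correct statement to verify is that $p \subseteq a^{-n}(pI_n + q\cap I_n)\cap A$ always holds because $a^np \subseteq pa^nI_0 \subseteq pI_n$... this is exactly the routine verification I would spell out, using $a^n I_0 = a^n A$ and the $a$-filtration inequality $a^nI_0\subseteq I_n$.

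For the converse $(2)\Rightarrow(1)$, I would set $J := \sum_{n\geq 0} a^{-n}(pI_n + q\cap I_n)$ and first check that $J$ is an ideal of $R$ containing $Q$ (it contains $q[a^{-1}]\cap R = \sum a^{-n}(q\cap I_n)$ by construction, and it is an $R$-submodule of $A[a^{-1}]$ contained in $R$ because $a^{-m}I_m \cdot a^{-n}(pI_n + q\cap I_n) \subseteq a^{-(m+n)}(pI_{m+n} + q\cap I_{m+n})$ using $I_mI_n\subseteq I_{m+n}$ and the ideal properties of $p, q$). The hypothesis $(2)$ says precisely that $J\cap A = p$, so in particular $a\in J$ and $J\neq R$ (since $1\notin J$, as $J\cap A = p \subsetneq A$). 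Then any prime $P$ of $R$ minimal over $J$ satisfies $P\supseteq J \supseteq Q$ and $P\cap A \supseteq J\cap A = p$; to get equality $P\cap A = p$ and strictness $Q\subsetneq P$ I would argue that $P\cap A$ is a prime of $A$ containing $p$, and that no prime of $R$ lying over a prime strictly larger than $p$ can be minimal over $J$ — equivalently, localize at $A\setminus p$ (legitimate since $a\in p$, so this does not invert $a$) and show that $J_p$ is contained in the maximal ideal $p A_p + \sum_{n\geq 1} a^{-n}(\cdots)$ of $R_p := (A\setminus p)^{-1}R$ while a minimal prime over it meets $A_p$ exactly in $pA_p$; strictness $Q\subsetneq P$ follows since $a\in P\setminus Q$.

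The main obstacle I anticipate is the delicate bookkeeping in $(2)\Rightarrow(1)$: showing that the ideal $J$ has a minimal prime $P$ whose contraction to $A$ is \emph{exactly} $p$ and not something larger. The condition $J\cap A = p$ guarantees that $p$ is reached ``at the bottom,'' but one must rule out that every minimal prime of $J$ sits over a strictly bigger prime of $A$. The clean way around this is the localization at $A\setminus p$: after inverting $A\setminus p$, the contraction map on spectra behaves well, $J$ becomes a proper ideal whose contraction is $pA_p$, and one picks a prime of $(A\setminus p)^{-1}R$ minimal over $(A\setminus p)^{-1}J$; its contraction to $A_p$ is a prime containing $pA_p$, but minimality forces it to be contained in the (unique relevant) prime lying over $pA_p$, giving the desired $P$. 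Pulling back to $R$ yields the required $P\in\Spec(R)$ with $Q\subsetneq P$ and $P\cap A = p$. The verification that $J$ is an ideal and that $J\supseteq Q$ is routine from the $a$-filtration axioms and will be stated without laboring the computation.
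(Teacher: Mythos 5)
Your proposal is correct and runs on the same engine as the paper's proof: everything is governed by the ideal $pR+Q=\sum_{n\geq0}a^{-n}\bigl(pI_n+q\cap I_n\bigr)$ (your $J$) together with the observation that condition (2) is exactly the contraction condition $(pR+Q)\cap A=p$. The difference is packaging. For $(2)\Rightarrow(1)$ the paper simply invokes the lying-over criterion of Atiyah--Macdonald (Proposition 3.16) for $A/q\hookrightarrow R/Q$, whereas you re-prove that criterion by hand: localize at $A\smallsetminus p$, note that $(A\smallsetminus p)^{-1}J$ is proper because $J\cap A=p$, pick a prime over it and contract back. For $(1)\Rightarrow(2)$ you argue directly that $a^{-n}(pI_n+q\cap I_n)\subseteq pR+Q\subseteq P$ and intersect with $A$, which is if anything more economical than the paper's route (the paper localizes at $p$ and derives the same conclusion from properness of $pR_p+Q_p$). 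So: same skeleton, more self-contained execution; the paper's version is shorter only because it outsources the lying-over step to a citation.

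Three local justifications need repair, none affecting the architecture. (i) In $(1)\Rightarrow(2)$, your parenthetical reason for $a^{-n}(pI_n)\subseteq P$ is not valid as stated: from $y\in P$ and $a^{-n}y\in R$ one cannot conclude $a^{-n}y\in P$, precisely because $a\in P$; the correct one-line reason is $a^{-n}pI_n=p\cdot\bigl(a^{-n}I_n\bigr)\subseteq pR\subseteq P$. (ii) Your hesitation about $a^n\in I_n$ is unfounded: since $I_0=A$ and $aI_k\subseteq I_{k+1}$, induction gives $a^nA\subseteq I_n$, hence $a^np\subseteq pI_n$ and the inclusion $p\subseteq a^{-n}(pI_n+q\cap I_n)\cap A$ is immediate (this is what upgrades ``$\subseteq p$'' to the equality asserted in (2)). (iii) In the localization step, neither minimality of the chosen prime nor the proposed ``maximal ideal'' $pA_p+\sum_{n\geq1}a^{-n}(\cdots)$ does any work; with the natural reading of the ellipsis that set is not even proper, since $a\in I_1$ puts $1=a^{-1}a$ in it. What you actually need, and already have, is that $(A\smallsetminus p)^{-1}J$ is a proper ideal containing $pA_p$: the contraction to $A_p$ of any prime of $(A\smallsetminus p)^{-1}R$ containing it is then a proper prime containing the maximal ideal $pA_p$, hence equals $pA_p$; pulling back gives $P\in\Spec(R)$ with $P\supseteq J\supseteq Q$ and $P\cap A=p$, and $a\in P\smallsetminus Q$ yields the strict inclusion $Q\subset P$.
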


\begin{proof} Recall first that $Q:=q[a^{-1}]\cap R=\sum_{n\geq 0}a^{-n}(q\cap I_n)$ is the unique prime ideal of $R$ lying over $q$ in $A$ (Lemma~\ref{sec:2.2}).

(1) $\Longrightarrow$ (2) Suppose  there exists $P\in\Spec(R)$ such that $Q\subset P$ and $P\cap A=p$. It is worth noting that $R_p= {\sum_{n\geq 0} a^{-n}{I_{n}A_{p}}}$ is associated with the $a$-filtration $(I_{n}A_{p})_n$ of $A_p$ and $Q_{p}= \sum_{n\geq 0} a^{-n}(qA_{p}\cap I_{n}A_{p})$ is the unique prime ideal of $R_{p}$ lying over $qA_{p}$ in $A_{p}$. Also $pR + Q= \sum_{n\geq 0} a^{-n}(pI_{n}+q\cap I_{n})$ and hence $pR_p + Q_{p}= \sum_{n\geq 0} a^{-n}(pI_{n}A_{p}+qA_{p}\cap I_{n}A_{p})=\bigcup_{n\geq 0} a^{-n}(pI_{n}A_{p}+qA_{p}\cap I_{n}A_{p})$, an ascending union of fractional ideals of $R_p$. Now $pR_p + Q_{p}$ is a proper ideal of $R_p$. Therefore, for each $n$, $a^n\not\in pI_{n}A_{p}+qA_{p}\cap I_{n}A_{p}$. Hence $sa^n\not\in pI_{n}+q\cap I_{n}$ for every $s\in A\setminus p$, whence $(a^{-n}(pI_{n} + q\cap I_{n})\cap A)\cap (A\smallsetminus p)=\emptyset$. It follows that $a^{-n}(pI_{n} + q\cap I_{n})\cap A= p$ for each $n\geq0$.

(2) $\Longrightarrow$ (1) Suppose $a^{-n}(pI_{n} + q\cap I_{n})\cap A=p$ for each $n\geq0$.  Then
$(pR+Q)\cap A= \bigcup_{n\geq0}\big(a^{-n}(pI_{n} + q\cap I_{n})\cap A\big)=p$. Therefore \cite[Proposition 3.16]{At} applied to the ring homomorphism $\frac{A}{q} \hookrightarrow \frac{R}{Q}$ leads to the conclusion.
\end{proof}

The special case where $q=0$ yields a necessary and sufficient condition for a prime ideal of $A$ containing $a$ to lift to a prime ideal of $R$.

\begin{corollary}\label{sec:2.4}
Let $A$ be a domain, $a\not=0\in A$, $(I_n)_{n\geq 0}$ an
$a$-filtration of $A$, and $R:=\sum_{n\geq 0}a^{-n}{I_n}$. Let $p\in\Spec(A)$ such that $a\in p$. Then $p$ is the contraction of a prime ideal of $R$ if and only if $a^{-n}pI_{n}\cap A=p$ for each $n$. Moreover, if any one condition holds, then $p\supseteq I_{1}$.
\end{corollary}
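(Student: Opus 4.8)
The plan is to obtain this as the special case $q=0$ of Theorem~\ref{sec:2.3}. First I would record that since $0\neq a\in p$, the prime $p$ is nonzero; hence a prime ideal $P$ of $R$ satisfies $P\cap A=p$ if and only if $P\cap A=p$ and $(0)\subsetneq P$. In other words, ``$p$ is the contraction of a prime ideal of $R$'' is precisely assertion~(1) of Theorem~\ref{sec:2.3} read with $q=0$, noting that $Q=0[a^{-1}]\cap R=(0)$ in this case.

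Next I would specialize assertion~(2). For $q=0$ one has $q\cap I_n=(0)$ for every $n$, so the condition $a^{-n}(pI_{n}+q\cap I_{n})\cap A=p$ collapses to $a^{-n}pI_{n}\cap A=p$ for each $n$. Combining this with the previous observation and Theorem~\ref{sec:2.3} yields the claimed equivalence.

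For the ``moreover'' part, assume $a^{-n}pI_{n}\cap A=p$ for every $n$; only the instance $n=1$ is needed. Since $a\in p$ we have $aI_{1}\subseteq pI_{1}$, whence $I_{1}=a^{-1}(aI_{1})\subseteq a^{-1}pI_{1}$ inside $A[a^{-1}]$. As $I_{1}\subseteq A$, this forces $I_{1}=I_{1}\cap A\subseteq a^{-1}pI_{1}\cap A=p$, i.e.\ $p\supseteq I_{1}$ (and then $p\supseteq I_{1}\supseteq I_{2}\supseteq\cdots$ by descent of the filtration).

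I do not anticipate a genuine obstacle here: the argument is purely a matter of unwinding definitions and invoking Theorem~\ref{sec:2.3}. The one point worth stating carefully is the translation between ``$p$ is a contracted prime'' and assertion~(1) of Theorem~\ref{sec:2.3}, which relies on the remark that $p\neq(0)$, so that the strict inclusion $(0)=Q\subsetneq P$ is automatic.
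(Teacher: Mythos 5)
Your proposal is correct and follows essentially the same route as the paper: specialize Theorem~\ref{sec:2.3} to $q=0$ (so $Q=(0)$ and the condition collapses to $a^{-n}pI_n\cap A=p$), then deduce $I_1\subseteq a^{-1}pI_1\cap A=p$ from $a\in p$. Your extra remarks — that $(0)\subsetneq P$ is automatic because $p\neq(0)$, and that the hypotheses of the theorem hold with $q=0$ — are just careful spelling-out of what the paper leaves implicit.
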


\begin{proof}
The equivalence is ensured by the above theorem with $q=0$. Moreover, $a\in p$ yields $I_{1} \subseteq a^{-1}pI_{1}\cap A=p$, as desired.
\end{proof}

Now it is legitimate to ask whether there may exist a chain of prime ideals
of $R$ of length $\geq2$ lying over a given prime ideal $p$ of $A$ containing $a$. Ahead, Corollary~\ref{sec:3.2} gives an affirmative answer to this question.

\end{section}

\begin{section}{The case of symbolic Rees algebras}\label{sec:3}

Here we will focus on the special case of symbolic Rees algebras. In 1958, Rees constructed in \cite{Re} a first counter-example to the
Zariski-Hilbert problem (initially posed at the Second International Congress of Mathematicians at Paris in 1900). His construction gave rise to
(what is now called) Rees algebras. Since then, these special graded algebras have been capturing the interest of many mathematicians, particularly
in the fields of commutative algebra and algebraic geometry.

Let $A$ be a domain, $t$ an indeterminate over $A$, and $p\in$ Spec$(A)$. For
each $n\in \Z$, set $p^{(n)}:=p^nA_p\cap A$, the $n^{th}$
symbolic power of $p$, with $p^{(n)}=A$ for each $n\leq 0$. Notice that $p=p^{(1)}$ and $p^{n}\subseteq p^{(n)}$ for all $n\geq2$. We recall the following definitions:\\
$\bullet$ $\bigoplus_{n\in \Z}p^nt^n=A[t^{-1},pt,\cdots ,p^nt^n,\cdots ]$ is the Rees algebra of $p$.\\
$\bullet$ $\bigoplus_{n\in \Z}p^{(n)}t^n=A[t^{-1},p^{(1)}t,\cdots ,p^{(n)}t^n,\cdots ]$ is the symbolic Rees algebra of $p$.

In 1970, based on Rees' work, Eakin and Heinzer constructed in \cite{EH} the first example of a $3$-dimensional non-Noetherian Krull domain. It arose
as a symbolic Rees algebra.  This enhances our interest for these constructions. In this section, we wish to
push further the analysis of the prime deal structure of symbolic Rees algebras. Precisely, we plan to investigates the lifting of prime ideals of
$A[t^{-1}]$ in the symbolic Rees algebra $R$. We prove that any prime ideal of $A[t^{-1}]$ lifts to a prime ideal in $R$. We also examine the length of chains of prime ideals of $R$ lying over a prime ideal of $A[t^{-1}]$.

Let us fix the notation for the rest of this section. Let $A$ be a domain and $t$ an indeterminate over $A$. Let $p\in\Spec(A)$ and let $$R:=A[t^{-1},pt,p^{(2)}t,\cdots ,p^{(n)}t^n,\cdots ]$$ be the symbolic Rees algebra of $p$. Consider the $t^{-1}$-filtration $(I_n)_{n\geq0}$ of $A[t^{-1}]$, where $I_{0}=A[t^{-1}]$,  $I_{1}=p[t^{-1}]+t^{-1}A[t^{-1}]$, and for $n\geq2$
$$I_n:=p^{(n)}[t^{-1}]+t^{-1}p^{(n-1)}[t^{-1}]+\cdots +t^{-(n-1)}p[t^{-1}]+t^{-n}A[t^{-1}].$$ One can easily check that
$$A[t^{-1}]\subseteq R\subseteq A[t^{-1},t] \mbox { and }R={\sum_{n\geq0}I_nt^n=\bigcup_{n\geq0}I_nt^n.}$$
Finally, for $q\supseteq p$ in $\Spec(A)$, set
$$\G(A):=\bigoplus_{n\geq0} \frac{p^{(n)}}{p^{(n+1)}} \mbox { and } \G(A_{q}):=\bigoplus_{n\geq0} \frac{pA_{q}^{(n)}}{pA_{q}^{(n+1)}}.$$

The first result examines the transfer of the Jaffard property.

\begin{proposition}\label{sec:3.0}
Assume $A$ to be a Jaffard domain. Then $R$ is a Jaffard domain with $\dim(R)=1+\dim(A)$.
\end{proposition}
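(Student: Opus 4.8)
The plan is to pin down $\dim(R)$ and $\dim_{v}(R)$ simultaneously by a squeezing argument, bypassing any direct analysis of $\Spec(R)$: the point is that $R$ sits between $A[t^{-1}]$ and its localization $A[t,t^{-1}]=A[t^{-1}][(t^{-1})^{-1}]$, and all three rings share the quotient field $\qf(A)(t)$. (If $p=0$ then $p^{(n)}=0$ for $n\ge1$, so $R=A[t^{-1}]$ is a one-variable polynomial ring over the Jaffard domain $A$ and the conclusion is classical; so assume $p\neq0$, whence $t=\frac{at}{a}\in\qf(R)$ for any $0\neq a\in p$ and $\qf(R)=\qf(A)(t)$.)

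For the upper bound: since $A[t^{-1}]\subseteq R$ and these domains have the same quotient field, every valuation overring of $R$ is a valuation overring of $A[t^{-1}]$; taking the supremum of the dimensions of such overrings gives $\dim_{v}(R)\leq\dim_{v}(A[t^{-1}])$. By Jaffard's theorem on the valuative dimension of polynomial rings \cite{J,ABDFK}, $\dim_{v}(A[t^{-1}])=\dim_{v}(A)+1$, and since $A$ is Jaffard this equals $\dim(A)+1$. Hence $\dim(R)\leq\dim_{v}(R)\leq\dim(A)+1$.

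For the lower bound: inverting $t^{-1}$ in $R$ adjoins $t$, and since every generator $p^{(n)}t^{n}$ of $R$ already lies in $A[t]$, one gets $R[(t^{-1})^{-1}]=R[t]=A[t,t^{-1}]$, which is a ring of fractions of $R$; hence $\dim(R)\geq\dim(A[t,t^{-1}])$. To bound the latter from below, pick a chain $q_{0}\subsetneq q_{1}\subsetneq\cdots\subsetneq q_{d}$ in $\Spec(A)$ with $d=\dim(A)$ (finite, as $A$ is Jaffard). Then $q_{0}[t]\subsetneq\cdots\subsetneq q_{d}[t]\subsetneq q_{d}[t]+(t-1)A[t]$ is a chain of length $d+1$ in $\Spec(A[t])$ (the top ideal is prime because $A[t]/(q_{d}[t]+(t-1))\cong A/q_{d}$), and no member of it contains $t$ (the image of $t$ in $A/q_{d}$ is $1\neq0$); so the chain survives in the localization $A[t,t^{-1}]=A[t]_{t}$, giving $\dim(A[t,t^{-1}])\geq d+1=\dim(A)+1$. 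Thus $\dim(R)\geq\dim(A)+1$.

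Combining, $\dim(A)+1\leq\dim(R)\leq\dim_{v}(R)\leq\dim(A)+1$, so all three coincide: $R$ is finite-dimensional with $\dim(R)=\dim(A)+1$ and $\dim(R)=\dim_{v}(R)$, i.e., $R$ is a Jaffard domain. There is no real obstacle in this route; the only two points that need care are the identity $R[(t^{-1})^{-1}]=A[t,t^{-1}]$ (so that the lower bound comes for free from a localization) and the invocation of $\dim_{v}(A[X])=\dim_{v}(A)+1$ — the governing idea being that the sandwich $A[t^{-1}]\subseteq R\subseteq A[t,t^{-1}]$ collapses both the Krull and the valuative dimension of $R$ onto $\dim(A)+1$ as soon as $A$ is assumed Jaffard.
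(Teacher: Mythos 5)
Your proof is correct and takes essentially the same route as the paper: squeeze $\dim(R)$ and $\dim_{v}(R)$ between $\dim(A)+1$ and $\dim(A)+1$ using the sandwich $A[t^{-1}]\subseteq R\subseteq A[t^{-1},t]$ together with $R[t]=A[t^{-1},t]$. The only difference is cosmetic: where the paper invokes results from \cite{ABDFK} (for $\dim_{v}(R)=\dim_{v}(A[t^{-1}])$, for $\dim(A[t^{-1}])\leq\dim(R)$, and for the Jaffardness of $A[t^{-1}]$), you verify the two bounds directly, via valuation overrings for the upper bound and via the observation that $A[t,t^{-1}]$ is a ring of fractions of $R$ plus an explicit chain for the lower bound.
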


\begin{proof}
Notice that $A[t^{-1}]\subseteq R\subseteq A[t^{-1},t]$. By \cite[Lemma 1.15]{ABDFK}, $\dim_{v}(R)=\dim_{v}(A[t^{-1}])$. On the other hand, the equality $R[t]=A[t^{-1},t]$ combined with \cite[Proposition 1.14]{ABDFK} yields $\dim(A[t^{-1}])\leq \dim(R)$. Now $A$ is Jaffard and then so is $A[t^{-1}]$ \cite[Proposition 1.2]{ABDFK}. Consequently, $\dim(R)=\dim_{v}(R)=1+\dim(A)$, as desired.
\end{proof}

Next, we investigate the prime ideals of $A[t^{-1}]$ that lift in the symbolic Rees algebra $R$. In view of Lemma~\ref{sec:2.2},
 one has to narrow the focus to the prime ideals which contain $t^{-1}$. Moreover, by Corollary~\ref{sec:2.4}, these primes
must necessarily contain $I_{1}=(p,t^{-1})$. Consequently, we reduce the study to the prime ideals of $A[t^{-1}]$ of the form $(q,t^{-1})$ where $q\supseteq p\in\Spec(A)$.

\begin{thm}\label{sec:3.1}
Let $q$ be a prime ideal of $A$ containing $p$. Then the following lattice isomorphisms hold:
\begin{enumerate}
\item $\big\{Q\in\Spec(R) \mid Q\cap A[t^{-1}] =(q,t^{-1})\big\}\simeq\Spec\left(\dfrac{\G(A_{q})}{\frac{qA_{q}}{pA_{q}}\G(A_{q})}\right)$.
\item $\big\{P\in\Spec(R) \mid P\cap A[t^{-1}] =(p,t^{-1})\big\}\simeq\Spec\big(\G(A_{p})\big)$.
\end{enumerate}
\end{thm}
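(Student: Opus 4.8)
The plan is to apply the general machinery of Section~\ref{sec:2}, specialized to the $t^{-1}$-filtration $(I_n)_{n\geq 0}$ of $A[t^{-1}]$, in order to identify the prime ideals of $R$ over $(q,t^{-1})$ with the prime spectrum of a suitable graded quotient. First I would observe that, by Lemma~\ref{sec:2.2} and Corollary~\ref{sec:2.4}, any $Q\in\Spec(R)$ with $Q\cap A[t^{-1}]=(q,t^{-1})$ must satisfy $Q\supseteq I_1=(p,t^{-1})$; in particular such $Q$ contains $t^{-1}$, so $R/Q$ is a quotient of $R/t^{-1}R$. Thus the first key step is to compute $R/t^{-1}R$. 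Using $R=\bigcup_{n\geq 0}I_n t^n$ and the explicit description of $I_n$, one sees that $t^{-1}R=\bigcup_{n\geq 0}t^{-1}I_n t^{n}$ and that $I_n t^n/t^{-1}I_{n-1}t^{n-1}\cong I_n/t^{-1}I_{n-1}$, which telescopes to give a natural graded isomorphism $R/t^{-1}R\cong \bigoplus_{n\geq 0} p^{(n)}/p^{(n+1)}=\G(A)$; here the degree-$n$ piece $p^{(n)}[t^{-1}]/(p^{(n+1)}[t^{-1}]+t^{-1}p^{(n)}[t^{-1}])$ collapses to $p^{(n)}/p^{(n+1)}$.

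Next I would pass to the localization at $q$ to replace $\G(A)$ by $\G(A_q)$. The point is that the prime ideals of $R$ contracting to $(q,t^{-1})$ are unchanged under localizing $R$ at the multiplicative set $A\smallsetminus q$: writing $R_q=\sum_{n\geq 0}I_n A_q t^n$ as in the proof of Theorem~\ref{sec:2.3}, one has $R_q=A_q[t^{-1},pA_q t,(pA_q)^{(2)}t^2,\dots]$, the symbolic Rees algebra of $pA_q$ over the local ring $A_q$, and the correspondence $Q\mapsto Q_q$ is a bijection between $\{Q\in\Spec(R): Q\cap A[t^{-1}]=(q,t^{-1})\}$ and $\{Q'\in\Spec(R_q): Q'\cap A_q[t^{-1}]=(qA_q,t^{-1})\}$. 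Combining with the previous step, $R_q/t^{-1}R_q\cong \G(A_q)$. Now a prime $Q'$ of $R_q$ lies over $(qA_q,t^{-1})$ exactly when it contains $t^{-1}R_q$ and its image in $\G(A_q)$ contracts to the image of $(qA_q,t^{-1})$ in degree zero; but the degree-zero component of $\G(A_q)$ is $A_q/pA_q$, and the image of $qA_q$ there is $qA_q/pA_q$. Hence those primes of $\G(A_q)$ are precisely the ones containing the extension of $qA_q/pA_q$, i.e. the primes of $\G(A_q)/\tfrac{qA_q}{pA_q}\G(A_q)$. This gives the lattice isomorphism in (1). Part (2) is then the special case $q=p$: there $qA_q/pA_q=0$, so the quotient disappears and the right-hand side is simply $\Spec(\G(A_p))$.

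I expect the main obstacle to be the bookkeeping in the first step — verifying carefully that the graded pieces of $R/t^{-1}R$ are exactly $p^{(n)}/p^{(n+1)}$ rather than something coarser. Concretely one must check that $t^{-1}R\cap I_n t^n = (p^{(n+1)}[t^{-1}]+t^{-1}I_{n-1})t^n$, which uses the inclusions $p^{(m)}p^{(k)}\subseteq p^{(m+k)}$ implicit in the $t^{-1}$-filtration structure and the fact that $t^{-1}$ is a nonzerodivisor. A secondary point requiring care is that the contraction condition "$Q\cap A[t^{-1}]=(q,t^{-1})$" translates correctly under the isomorphism $R/t^{-1}R\cong\G(A)$: one needs that the composite $A[t^{-1}]\hookrightarrow R\twoheadrightarrow R/t^{-1}R\cong\G(A)$ kills $t^{-1}$ and sends $A$ onto the degree-zero component $A/p^{(1)}=A/p$, so that the preimage of $q$ is indeed $(q,t^{-1})$; this is routine once the isomorphism is set up correctly. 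Everything else is a formal application of the order-preserving, inclusion-reversing nothing-lost correspondence between primes of a ring containing an ideal $J$ and primes of the quotient by $J$, applied twice (once for $t^{-1}R_q$, once for $\tfrac{qA_q}{pA_q}\G(A_q)$).
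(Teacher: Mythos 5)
Your argument is correct and essentially the paper's: both rest on the graded identification $R/t^{-1}R\cong\G(A)$ together with localization at $q$ (which makes $(qA_q,t^{-1})$ maximal, so ``lying over'' becomes ``containing''); the only cosmetic difference is that the paper computes $(q,t^{-1})R$ degreewise and obtains $R/(q,t^{-1})R\cong\G(A)/(q/p)\G(A)$ in one step, where you quotient first by $t^{-1}R_q$ and then by the extension of $qA_q/pA_q$. One caveat on the verification you flag: the identity $t^{-1}R\cap I_nt^n=(p^{(n+1)}[t^{-1}]+t^{-1}I_{n-1})t^n$ is false as written (its right-hand side contains $1$, while $1\notin t^{-1}R$); what is actually needed, and what the paper reads off directly from the $t$-grading of $R$ over $A$, is that the degree-$k$ component of $t^{-1}R$ is $p^{(k+1)}t^k$, which yields exactly the components $p^{(k)}/p^{(k+1)}$ you state.
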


\begin{proof}
(1) Let $\G(q):=\displaystyle {\frac Aq\oplus \frac p{p^{(2)}+qp}\oplus \frac
{p^{(2)}}{p^{(3)}+qp^{(2)}}\oplus \cdots }$. We claim that $${\dfrac{R}{(q,t^{-1})R}\cong \G(q)\cong \dfrac{\G(A)}{(q/p)\G(A)}}.$$ Indeed, notice the following:
$$\left \{
\begin{array}{lll}
R&=&A[t^{-1}]\oplus pt\oplus p^{(2)}t^2\oplus \cdots \oplus p^{(n)}t^n\oplus \cdots \\
&&\\
t^{-1}R&=&(p,t^{-1})\oplus p^{(2)}t\oplus\cdots \oplus p^{(n+1)}t^n\oplus\cdots \\
&&\\
qR&=&q[t^{-1}]\oplus qpt\oplus qp^{(2)}t^2\oplus\cdots \oplus qp^{(n)}t^n\oplus\cdots \\
&&\\
(q,t^{-1})R&=&(q,t^{-1})\oplus (p^{(2)}+qp)t\oplus\cdots \oplus (p^{(n+1)}+qp^{(n)})t^n\oplus\cdots
\end{array}
\right.$$
Then it is easily seen that ${\dfrac
R{t^{-1}R}\cong \G(A)}$ and ${\dfrac R{(q,t^{-1})R}\cong
\G(q)}$. Moreover,

$\begin{array}{lll} \G(q)&\cong& \displaystyle {\frac
{A/p}{q/p}\oplus \frac {p/p^{(2)}}{(q/p)(p/p^{(2)})}\oplus\cdots \oplus
\frac {p^{(n)}/p^{(n+1)}}{(q/p)(p^{(n)}/p^{(n+1)})}}\oplus\cdots \\
&\cong&\dfrac{\G(A)}{(q/p)\G(A)}.
\end{array}$

Now, observe that $R_q=A_q[t^{-1},pA_q^{(1)}t,\cdots ,pA_q^{(n)}t^n,\cdots ]$ is the symbolic Rees algebra of $pA_q$. This is due to the fact that
$p^{(n)}A_q=p^nA_p\cap A_q=pA_q^{(n)}$ for each $n\geq0$. We obtain
$$\frac {R_q}{(q,t^{-1})R_q}=\frac{R_q}{(qA_{q},t^{-1})R_q}\cong \dfrac{\G(A_{q})}{\frac{qA_{q}}{pA_{q}}\G(A_{q})}.$$
Hence the set of prime ideals of $R$
lying over $(q,t^{-1})$ in $A[t^{-1}]$ is lattice isomorphic to the spectrum of $\frac{\G(A_{q})}{(qA_{q}/pA_{q})\G(A_{q})}$.

(2) Take $q:=p$ in (1), completing the proof of the theorem.
\end{proof}

We deduce the following result in the Noetherian case. It shows, in particular, that  there may exist a chain of prime ideals
of $R$ of length $\geq2$ lying over $(p,t^{-1})$ in $A[t^{-1}]$.

\begin{corollary}\label{sec:3.2}
Assume that $A$ is Noetherian and let $n:=\htt(p)$. Then the set $\big\{P\in\Spec(R) \mid P\cap A[t^{-1}] =(p,t^{-1})\big\}$ is lattice isomorphic to the spectrum of an n-dimensional finitely generated algebra over the field $\dfrac{A_{p}}{pA_{p}}$.
\end{corollary}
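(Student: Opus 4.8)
The plan is to derive this corollary directly from Theorem~\ref{sec:3.1}(2), which already identifies the set $\{P\in\Spec(R)\mid P\cap A[t^{-1}]=(p,t^{-1})\}$ with $\Spec(\G(A_p))$ as lattices. So the task reduces to understanding the ring $\G(A_p)=\bigoplus_{n\geq 0} pA_p^{(n)}/pA_p^{(n+1)}$ when $A$ is Noetherian, and showing it is a finitely generated algebra over the residue field $A_p/pA_p$ of Krull dimension $n=\htt(p)$.

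First I would reduce to the local Noetherian ring $(A_p,\mathfrak{m})$ with $\mathfrak{m}=pA_p$, so that $\dim(A_p)=\htt(p)=n$. Since $A_p$ is Noetherian local, its maximal ideal is $\mathfrak{m}$-primary in the trivial sense, so for each $k$ the symbolic power $\mathfrak{m}^{(k)}=\mathfrak{m}^k A_{\mathfrak{m}}\cap A_p=\mathfrak{m}^k$ — that is, symbolic and ordinary powers of the maximal ideal of a local ring coincide. Hence $\G(A_p)$ is just the ordinary associated graded ring $\gr_{\mathfrak{m}}(A_p)=\bigoplus_{n\geq 0}\mathfrak{m}^n/\mathfrak{m}^{n+1}$. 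It is classical (e.g. from the theory of the Hilbert function, Atiyah–Macdonald Ch.~11 or Matsumura) that $\gr_{\mathfrak{m}}(A_p)$ is a finitely generated algebra over $A_p/\mathfrak{m}=A_p/pA_p$: it is generated as an $(A_p/\mathfrak{m})$-algebra by the images in degree one of any finite generating set of $\mathfrak{m}$. Moreover the Krull dimension of $\gr_{\mathfrak{m}}(A_p)$ equals $\dim(A_p)=n$, another standard fact from dimension theory (the dimension is read off from the degree of the Hilbert polynomial, which is the same for $A_p$ and $\gr_{\mathfrak{m}}(A_p)$).

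Stitching these together: by Theorem~\ref{sec:3.1}(2) the displayed set is lattice isomorphic to $\Spec(\G(A_p))=\Spec(\gr_{\mathfrak{m}}(A_p))$, and $\gr_{\mathfrak{m}}(A_p)$ is an $n$-dimensional finitely generated algebra over the field $A_p/pA_p$, which is exactly the assertion. I should be careful to state the identification $\G(A_p)=\gr_{pA_p}(A_p)$ explicitly, invoking that $(pA_p)^{(k)}=(pA_p)^k$ in the local ring $A_p$; this is the one small observation that does real work, although it is essentially a tautology once one unwinds the definition $q^{(k)}=q^kA_q\cap A$ at $q=p$. Everything else is a citation to standard commutative algebra (finite generation and dimension of the associated graded ring of a Noetherian local ring).

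The main obstacle, such as it is, is purely expository rather than mathematical: making sure the reader sees that $\G(A_p)$ as defined in this paper (with symbolic powers) collapses to the ordinary $\gr$ once we are at the prime $p$ itself and have localized, so that the well-known structure theorems apply verbatim. No new estimate or construction is needed beyond Theorem~\ref{sec:3.1} and textbook results.
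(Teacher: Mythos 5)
Your proposal is correct and follows essentially the same route as the paper: invoke Theorem~\ref{sec:3.1}(2) to identify the fiber over $(p,t^{-1})$ with $\Spec(\G(A_p))$, note that symbolic and ordinary powers of $pA_p$ coincide in the local ring $A_p$ so that $\G(A_p)=\gr(A_p)$, and then cite the standard facts that $\gr(A_p)$ is generated over $A_p/pA_p$ by degree-one images of generators of $pA_p$ and has dimension $\dim(A_p)=\htt(p)$ (Matsumura, Theorem 15.7). No substantive difference from the paper's argument.
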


\begin{proof}
Let $y_1,\cdots ,y_r\in A$ such that $pA_p=(y_1,\cdots ,y_r)A_p$ and let $e_1,\cdots ,e_r$ denote their respective images in $\dfrac{pA_p}{p^2A_p}$. By Theorem~\ref{sec:3.1}, $\dfrac {R_p}{(p,t^{-1})R_p}\cong \G(A_{p})$. Note that $R_{p}$ coincides with the Rees algebra of $pA_{p}$ (since $p^{n}A_{p}=p^{(n)}A_{p}$ for all $n\geq1$). It follows that $$\G(A_{p})=\gr(A_p)=\dfrac{A_{p}}{pA_{p}}[e_1,\cdots ,e_r]\ \hbox{(cf. \cite[p. 93]{Ma})}.$$ On the other hand, by \cite[Theorem 15.7]{Ma}, $\dim(\gr(A_p))=\dim(A_{p})=n$, completing the proof.
\end{proof}

Notice at this point that $t^{-1}R=(p,t^{-1})R$ (see the proof of Theorem~\ref{sec:3.1}). This translates into the fact that prime ideals of $R$ containing $t^{-1}$ contain necessarily $p[t^{-1}]$ (stated in Corollary~\ref{sec:2.4}). Given a prime ideal $q$ of $A$, we next exhibit particular prime ideals of $R$ that lie over $q$.

\begin{proposition}\label{sec:3.3}
Let $q\in\Spec(A)$. The following hold:
\begin{enumerate}
\item Assume $p\subseteq q$. Then $Q:=(q,t^{-1})\oplus pt\oplus p^{(2)}t^2\oplus \cdots $ is a prime ideal of $R$ lying over $(q,t^{-1})$ in $A[t^{-1}]$ and $P$ is maximal with this property.

\item $q[t^{-1},t]\cap R=q[t^{-1}]\oplus (p\cap q)t\oplus (p^{(2)}\cap q)t^2\oplus\cdots $ is the unique prime ideal of $R$ lying over $q[t^{-1}]$ in $A[t^{-1}]$.
\end{enumerate}
\end{proposition}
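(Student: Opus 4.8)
The plan is to prove both parts by verifying directly that the displayed $A[t^{-1}]$-submodules of $R$ are prime ideals, and then identifying their contractions to $A[t^{-1}]$; the claimed extremality in (1) and uniqueness in (2) should then follow from the structural description of the fibers obtained in Theorem~\ref{sec:3.1} and Lemma~\ref{sec:2.2} respectively.

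For part (1), first I would observe that since $p\subseteq q$ one has $p^{(n)}\subseteq p\subseteq q$ for $n\ge 1$, so $Q:=(q,t^{-1})\oplus pt\oplus p^{(2)}t^2\oplus\cdots$ is well-defined as a graded $A[t^{-1}]$-submodule of $R$, and it is an ideal because $R=A[t^{-1}]\oplus pt\oplus p^{(2)}t^2\oplus\cdots$ and the product of any homogeneous element of $R$ with a homogeneous element of $Q$ lands in degree zero inside $(q,t^{-1})$ (using $p^{(m)}p^{(n)}\subseteq p^{(m+n)}\subseteq p$) or in positive degree inside $p^{(m)}t^m$. To see $Q$ is prime it is cleanest to exhibit $R/Q$ as a domain: the degree-zero part of $R/Q$ is $A[t^{-1}]/(q,t^{-1})\cong A/q$, and every element of positive degree is killed, so in fact $R/Q\cong A/q$, which is a domain. (Concretely, $Q$ is the kernel of the composite $R\twoheadrightarrow R/t^{-1}R\cong\G(A)\twoheadrightarrow \G(A)/\G(A)_{+}\cong A/p\twoheadrightarrow A/q$.) Then $Q\cap A[t^{-1}]=(q,t^{-1})$ by inspection of the degree-zero component. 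Finally, maximality among primes lying over $(q,t^{-1})$: under the lattice isomorphism of Theorem~\ref{sec:3.1}(1), primes of $R$ over $(q,t^{-1})$ correspond to primes of $\G(A_q)/\frac{qA_q}{pA_q}\G(A_q)$; the prime $Q$ corresponds to the irrelevant maximal ideal (everything in positive degree, plus $qA_q/pA_q$ in degree zero), which is the unique maximal homogeneous ideal and is a maximal ideal of that finitely generated algebra, hence $Q$ is maximal among the primes in question. I would note the harmless typo that the statement should read ``$Q$ is maximal'' rather than ``$P$ is maximal''.

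For part (2), set $J:=q[t^{-1},t]\cap R$. The intersection of the graded ideal $q[t^{-1},t]$ of $A[t^{-1},t]$ with the graded subring $R$ is graded, and in degree $n\ge 1$ it is $q[t^{-1},t]\cap p^{(n)}t^n = (q\cap p^{(n)})t^n$, while in degree $0$ it is $q[t^{-1},t]\cap A[t^{-1}] = q[t^{-1}]$; this gives the displayed description. It is prime as the contraction of the prime $q[t^{-1},t]$ along $R\hookrightarrow A[t^{-1},t]$. That $J$ lies over $q[t^{-1}]$ is immediate from its degree-zero part. For uniqueness, note $t^{-1}\notin J$ (since $t^{-1}$ is a unit in $A[t^{-1},t]$, or directly since $1\notin q$), so $J$ is a prime of $R$ not containing $t^{-1}$; by Lemma~\ref{sec:2.2} such primes are in bijection with primes of $A[t^{-1}]$ not containing $t^{-1}$, and $q[t^{-1}]$ is exactly such a prime, so $J$ is the unique prime of $R$ contracting to $q[t^{-1}]$.

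I expect the only mildly delicate point to be the maximality assertion in (1): one must be careful that ``maximal among primes lying over $(q,t^{-1})$'' is genuinely witnessed by the irrelevant ideal on the graded-ring side, i.e.\ that $\G(A_q)/\frac{qA_q}{pA_q}\G(A_q)$ has its positive-degree part contained in every maximal ideal is false in general, but the \emph{specific} prime $Q$ maps to the irrelevant maximal ideal, which \emph{is} maximal in a finitely generated algebra over a field by the Nullstellensatz; so the argument goes through once one checks $Q$ really corresponds to that ideal under the isomorphism of Theorem~\ref{sec:3.1}(1). Everything else is a routine degree-by-degree verification.
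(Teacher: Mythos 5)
Your argument is essentially the paper's. Part (2) is exactly the paper's proof: appeal to Lemma~\ref{sec:2.2} for uniqueness and then compute $q[t^{-1},t]\cap R$ degree by degree. In part (1), the identification $R/Q\cong A/q$ and the passage to the fiber over $(q,t^{-1})$ by localizing at $q$ is also what the paper does (it notes that $R_q$ is the symbolic Rees algebra of $pA_q$ and that $R_q/QR_q\cong A_q/qA_q$ is a field; and yes, the ``$P$'' in the statement is a typo for $Q$). The one step you should repair is the justification of maximality in (1): Proposition~\ref{sec:3.3} carries no Noetherian hypothesis, so $\G(A_q)/\frac{qA_q}{pA_q}\G(A_q)$ need not be a finitely generated algebra over a field (that finiteness is available only under the hypotheses of Corollary~\ref{sec:3.2}), and even in the finitely generated case the Nullstellensatz is not what makes the irrelevant ideal maximal. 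No such input is needed: the image of $Q$ in $\G(A_q)/\frac{qA_q}{pA_q}\G(A_q)$ is the ideal of positive-degree elements, whose quotient is the degree-zero component $A_q/qA_q$, a field; equivalently $R_q/QR_q\cong A_q/qA_q$, which is precisely the paper's one-line argument. With that substitution your proof coincides with the paper's.
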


\begin{proof}
(1) Assume $p\subseteq q$. It is easily seen that $\dfrac RQ\cong \dfrac Aq$. It follows that $Q$ is a
prime ideal of $R$ and $Q\cap A[t^{-1}]=(q,t^{-1})$. Now $R_q$ is the symbolic Rees algebra of $pA_q$ with
$\dfrac{R_q}{PR_q}\cong \dfrac{A_q}{qA_q}$, a field. Therefore $Q$ is maximal among the prime ideals of $R$ lying over $(q,t^{-1})$.

(2) By Lemma~\ref{sec:2.2}, the unique prime ideal of $R$ lying over $q[t^{-1}]$ is $q[t^{-1},t]\cap R$. Further observe that
$q[t^{-1},t]=q[t^{-1}]\oplus qt\oplus qt^{2}\oplus \cdots $. So that $q[t^{-1},t]\cap R=q[t^{-1}]\oplus (p\cap q)t\oplus (p^{(2)}\cap q)t^2\oplus\cdots $, as claimed.
\end{proof}

\begin{thm}\label{sec:3.4}
Assume that $A$ is Noetherian and $p\in\Max(A)$. Then $R$ is a stably  strong
S-domain (hence locally Jaffard).
\end{thm}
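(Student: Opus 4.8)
The plan is to exploit the sandwich $A[t^{-1}]\subseteq R\subseteq A[t^{-1},t]=R[t]$ together with the structural information already extracted about the prime ideals of $R$ lying over primes of $A[t^{-1}]$. Recall that being a stably strong S-domain means $R[X_1,\dots,X_m]$ is a strong S-ring for every $m\geq 0$. Since $A$ is Noetherian, $A[t^{-1}]$ is Noetherian and hence stably strong S; likewise $R[t]=A[t^{-1},t]$ is Noetherian, hence stably strong S. The heart of the matter is to control what happens along the flat (indeed, ascending-union-of-finite-type) extension $A[t^{-1}]\hookrightarrow R$, and the key leverage is the hypothesis $p\in\Max(A)$: this forces $(p,t^{-1})$ to be a \emph{maximal} ideal of $A[t^{-1}]$, so the only primes of $A[t^{-1}]$ that can carry extra structure in $R$ (those containing $I_1=(p,t^{-1})$, by Corollary~\ref{sec:2.4}) sit at the very top.

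First I would make precise the two-tier picture of $\Spec(R)$. By Lemma~\ref{sec:2.2}, primes of $R$ not containing $t^{-1}$ correspond bijectively (as ordered sets) to primes of $A[t^{-1}]$ not containing $t^{-1}$, i.e. to primes of $A$; on this locus $R$ and $A[t^{-1}]$ are essentially interchangeable after inverting $t^{-1}$, and going-down/S-behavior is inherited verbatim from the Noetherian ring $A[t^{-1}]$. Primes of $R$ containing $t^{-1}$ contain $p[t^{-1}]$ (the remark after Corollary~\ref{sec:3.2}), so they lie over $(q,t^{-1})$ for some $q\supseteq p$ in $\Spec(A)$; since $p$ is maximal, the only such $q$ is $p$ itself, so every prime of $R$ containing $t^{-1}$ lies over the single maximal ideal $\mathfrak m:=(p,t^{-1})$ of $A[t^{-1}]$. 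By Theorem~\ref{sec:3.1}(2) and Corollary~\ref{sec:3.2}, the fibre over $\mathfrak m$ is $\Spec(\G(A_p))$, and $\G(A_p)=\gr(A_p)$ is a \emph{finitely generated algebra over the field $A_p/pA_p$} of dimension $n=\htt(p)$. In particular this fibre ring is Noetherian, affine over a field, hence a stably strong S-domain in each of its quotients, and all of this persists after adjoining indeterminates $X_1,\dots,X_m$.

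Next I would assemble these pieces into the strong S-property for $R[X_1,\dots,X_m]$. Fix $m$ and write $S:=R[X_1,\dots,X_m]$; I must show $S/\mathfrak P$ is an S-domain for every $\mathfrak P\in\Spec(S)$, equivalently (by a standard reduction) that $S$ itself is a strong S-ring once we know the same for all the base rings in sight. Let $\mathfrak q:=\mathfrak P\cap R$. If $t^{-1}\notin\mathfrak q$, then localizing at powers of $t^{-1}$ identifies $R/\mathfrak q$ with a localization of $A[t^{-1}]/(\mathfrak q\cap A[t^{-1}])$, a quotient of a Noetherian ring, so $(R/\mathfrak q)[X_1,\dots,X_m]=S/\mathfrak P'$ for the corresponding $\mathfrak P'$ is a strong S-ring; since the S-domain property passes to quotients this gives what we need for $\mathfrak P$. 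If $t^{-1}\in\mathfrak q$, then $\mathfrak q$ corresponds (under Theorem~\ref{sec:3.1}(2)) to a prime of $\G(A_p)$, and $R/\mathfrak q$ is a quotient of $R/(p,t^{-1})R=R/t^{-1}R\cong\G(A)$; I would show that $\G(A_p)$ being an affine domain over a field makes $R/\mathfrak q$ a quotient of such an affine domain (after the harmless localization at $A\setminus p$, which does not affect the S-domain property since it is insensitive to localization at primes). Then $(R/\mathfrak q)[X_1,\dots,X_m]$ is a quotient of an affine algebra over a field, hence Noetherian, hence a strong S-ring, so $S/\mathfrak P$ is an S-domain. Covering both cases yields that $R$ is stably strong S, and then local Jaffardness follows from the cited implication (a stably strong S-domain is locally Jaffard, \cite{ABDFK,K1}), which combined with Proposition~\ref{sec:3.0} even pins down $\dim(R)=1+\dim(A)$.

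The main obstacle I anticipate is the gluing step: a ring can fail to be a strong S-ring even when every residue-at-a-prime and every "fibre" ring is well-behaved, so I cannot merely quote the good behavior of $A[t^{-1}]$ and of $\G(A_p)$ piecemeal — I need a genuine argument that an arbitrary quotient $R/\mathfrak q$ (and then $(R/\mathfrak q)[X_1,\dots,X_m]$) is an S-domain. The cleanest route is probably to show directly that every quotient of $R$ by a prime is, up to a localization that is irrelevant for the S-property, either a localization of a Noetherian ring or a quotient of an affine domain over a field; the delicate point is the mixed case where $\mathfrak q$ meets $A$ in a prime $q'$ with $p\subseteq q'$ but $\mathfrak q$ does \emph{not} contain $t^{-1}$ — here one uses $p\in\Max(A)$ again (forcing $q'=p$ or $q'\supsetneq p$ impossible since... no: $q'$ can be any prime containing $p$, but $p$ maximal forces $q'=p$, and then $\mathfrak q\cap A[t^{-1}]\subseteq(p,t^{-1})=\mathfrak m$ with $t^{-1}\notin\mathfrak q$ forces $\mathfrak q\cap A[t^{-1}]=p[t^{-1}]$, handled by Proposition~\ref{sec:3.3}(2)). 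Keeping track of exactly which primes of $A[t^{-1}]$ arise as contractions, and matching each case to the right "nice" overring or quotient, is where the care is needed.
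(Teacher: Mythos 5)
There is a genuine gap, and it sits exactly where the difficulty of the theorem lies. In your first case ($t^{-1}\notin\mathfrak{q}:=\mathfrak{P}\cap R$) you identify, after inverting $t^{-1}$, the ring $R/\mathfrak{q}$ with a localization of a Noetherian ring and conclude that $(R/\mathfrak{q})[X_1,\dots,X_m]$ is a strong S-ring. But only the localization $(R/\mathfrak{q})[t]$ is so identified; $R/\mathfrak{q}$ itself is merely a subring of it, and the S-properties are not detected by such a localization: the troublesome saturated chains $\mathfrak{P}\subset Q$ in $R[X_1,\dots,X_m]$ are precisely those with $t^{-1}\in Q$ but $t^{-1}\notin\mathfrak{P}$, and these are invisible after inverting $t^{-1}$ and are not covered by your second case (which treats only $t^{-1}\in\mathfrak{P}$). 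Taken at $\mathfrak{q}=0$, your case-1 inference reads ``$R[X_1,\dots,X_m]$ is a strong S-ring because $R[t]=A[t^{-1},t]$ is Noetherian,'' which is the very statement to be proved, not a consequence; and the inference is false in general: for $D=\Q+X\R[X]$ one has $D[X^{-1}]=\R[X,X^{-1}]$ Noetherian, yet $D$ is not Jaffard, hence not stably strong S. (Also, the S-domain property does not pass to quotients --- that failure is the whole point of the ``strong S'' notion --- so only the strong S-ring property of the ambient polynomial ring could be quoted there, and that is what is missing.) You correctly flag the ``gluing'' obstacle at the end, but the proposal supplies no mechanism to overcome it.

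The idea that fills this gap in the paper is to compare $R$ not with $A[t^{-1}]$ but with the ordinary Rees algebra $T:=A[t^{-1},pt,\dots,p^nt^n,\dots]$, which is Noetherian since $p$ is finitely generated, and to prove that contraction $f:\Spec(R[X_1,\dots,X_n])\to\Spec(T[X_1,\dots,X_n])$ is an order-preserving bijection. Maximality of $p$ enters twice: a prime $Q$ of $T[X_1,\dots,X_n]$ containing $t^{-1}$ has $(p,t^{-1})\subseteq Q\cap A[t^{-1}]$ (Corollary~\ref{sec:2.4}), so $Q\cap A=p$; and $p^{(n)}A_p=p^nA_p$ gives $R_p=T_p$, so such a $Q$ has a unique prime of $R[X_1,\dots,X_n]$ above it, obtained through $T_p[X_1,\dots,X_n]=R_p[X_1,\dots,X_n]$, while primes avoiding $t^{-1}$ are handled through $A[t^{-1},t,X_1,\dots,X_n]=R[t,X_1,\dots,X_n]=T[t,X_1,\dots,X_n]$. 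The strong S-property of $T[X_1,\dots,X_n]$ then transfers to $R[X_1,\dots,X_n]$ along this bijection (used in $n$ and $n+1$ variables). Your second case is essentially sound --- with $p$ maximal, $R/t^{-1}R\cong\G(A)=\gr(A)$ is an affine domain over the field $A/p$, so those quotients are Noetherian --- but the heart of the proof is the case you dispose of by the unjustified localization step, and some comparison with a Noetherian ring such as $T$ (or an argument of equivalent strength) is needed there.
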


\begin{proof}
Let $T:=A[t^{-1},pt,p^2t^2,\cdots ,p^nt^n,\cdots ]$ be the Rees algebra of $p$.
Let $n$ be a positive integer. Consider the natural injective ring homomorphism:\\ $T[X_1,\cdots ,X_n]\hookrightarrow R[X_1,\cdots ,X_n]$. This induces the following map $$f:\Spec(R[X_1,\cdots ,X_n])\longrightarrow\Spec(T[X_1,\cdots ,X_n])$$ defined by $f(P)=P\cap T[X_1,\cdots ,X_n]$. We claim that $f$ is an order-preserving bijection. Indeed, let $Q$ be a prime ideal of $T[X_1,\cdots ,X_n]$. If $t^{-1}\not\in Q$, then $Q$ survives in
$$A[t^{-1},t,X_1,\cdots ,X_n]=R[t,X_1,\cdots ,X_n]=T[t,X_1,\cdots ,X_n].$$ Therefore $P:=QA[t^{-1},t,X_1,\cdots ,X_n]\cap R[X_1,\cdots ,X_n]$. Hence $P$ is the unique prime ideal of $R[X_1,\cdots ,X_n]$ such that $f(P)=Q$. Now, let $t^{-1}\in Q$. Then $(p,t^{-1})\subseteq Q\cap A[t^{-1}]$ by Corollary~\ref{sec:2.4}, whence $p=Q\cap A$ as $p$ is maximal in $A$. Moreover recall that $R_p=T_p$. Therefore $Q$ survives in $T_p[X_1,\cdots ,X_n]=R_p[X_1,\cdots ,X_n]$ and hence $P:=QR_p[X_1,\cdots ,X_n]\cap R[X_1,\cdots ,X_n]$ is the unique prime ideal of $R[X_1,\cdots ,X_n]$ such that $f(P)=Q$. It follows that $f$ is bijective. Obviously, it also preserves the inclusion order. Now assume $p=(a_1,\cdots ,a_r)$. One can easily check that $T=A[t^{-1},a_{1}t,\cdots ,a_{r}t]$, so that $T$ is Noetherian and thus a stably  strong S-domain. It follows that $T[X_1,\cdots ,X_n]$ is a strong S-domain and so is $R[X_1,\cdots ,X_n]$, as desired.
\end{proof}

\begin{remark}
It is worth noting that the proof of the above theorem is still valid if we weaken the assumption ``$A$ is Noetherian" to ``$A$ is a stably  strong S-domain and $p$ is finitely generated" since the concept of strong S-domain is stable under quotient ring.
\end{remark}
\end{section}

\begin{section}{Associated graded rings and applications}\label{sec:4}

This section investigates the dimension theory of graded rings associated with special symbolic Rees algebras. Recall that the
Krull dimension of the graded ring associated with the (ordinary) Rees algebra of an ideal $I$ of a Noetherian domain $A$ is given by the formula (cf. \cite[Exercise 13.8]{E}):
$$\dim(\gr_{I}(A))=\max\{ht(q)\mid q\in\Spec(A)\mbox{ and }I\subseteq q\}.$$

Let us fix the notation for this section. Throughout $A$ will denote a Noetherian domain and $p$ a prime ideal of $A$ such that $A_p$ is a rank-one DVR. Thus, any height-one prime ideal of an integrally closed Noetherian domain falls within the scope of this study.
Let $R$, $\G(A)$, and $\gr(A)$ denote the symbolic Rees algebra of $p$, the associated graded ring of $A$ with respect to the filtration $(p^{(n)})_n$, and the associated graded ring of $p$, respectively. That is,
$$R:=A[t^{-1},pt,p^{(2)}t^2,\cdots ,p^{(n)}t^n,\cdots ],$$
$$\G(A):=\bigoplus_{n\geq0} \frac{p^{(n)}}{p^{(n+1)}},$$
$$\gr(A):=\bigoplus_{n\geq0} \frac{p^{n}}{p^{n+1}}.$$
Finally,  let $u\in p$ such that $pA_{p}=uA_{p}$ and $v:=\overline{u}$ be the image of $u$ in $p/p^{(2)}$.

\begin{lemma}\label{sec:4.1}
For each $n\geq0$, let $E_n:=(A:_{A_p}u^n)=\{x\in A_p\mid xu^n\in A\}$ and $F_n:=\overline {E_n}$ be the image of $E_n$ in $K:=A_p/pA_p$. Then:
\begin{enumerate}
\item $(E_n)_{n\geq0}$ is an ascending sequence of fractional ideals of $A$ such that $A\subseteq E_n\subseteq A_p$
and $p^{(n)}=E_nu^n$, for each $n$.

\item $(F_n)_{n\geq0}$ is an ascending sequence of fractional ideals of $\dfrac{A}{p}$ such that $\dfrac{A}{p}\subseteq F_n\subseteq K$
and $\dfrac{p^{(n)}}{p^{(n+1)}}=F_nv^n$, for each $n$.

\item $\G(A)=\bigoplus_{n\geq 0}F_nv^n$.
\end{enumerate}
\end{lemma}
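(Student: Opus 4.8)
The plan is to unwind the definitions of $E_n$ and $F_n$ against the identity $p^{(n)}=p^nA_p\cap A=u^nA_p\cap A$, valid since $pA_p=uA_p$ forces $p^nA_p=u^nA_p$, and then to read off the structure of $\G(A)$ from the fact that $A_p$ is a DVR, so that $\gr_{pA_p}(A_p)$ is a polynomial ring in one variable over $K$ (with $v$ the image of $u$).

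I would first dispose of (1). Each $E_n$ is visibly an $A$-submodule of $A_p$; in fact $u^nE_n=u^nA_p\cap A=p^{(n)}$ is an ideal of $A$, so $E_n=u^{-n}p^{(n)}$ is a finitely generated fractional ideal (as $A$ is Noetherian) and $p^{(n)}=E_nu^n$ comes for free. The inclusions $A\subseteq E_n\subseteq A_p$ are immediate, and $E_n\subseteq E_{n+1}$ follows from $u\in A$. Part (2) then transfers formally, once one records that $K=A_p/pA_p=\operatorname{Frac}(A/p)$ (because $A_p/pA_p$ is the localization of $A/p$ at its nonzero elements) and $A/p\hookrightarrow K$ (as $pA_p\cap A=p$): $F_n$, being the image of the finitely generated $A$-module $E_n$ under $A_p\twoheadrightarrow K$, is a nonzero finitely generated $A/p$-submodule of $K$ containing $A/p$, hence a fractional ideal of $A/p$, and $F_n\subseteq F_{n+1}$ comes from $E_n\subseteq E_{n+1}$.

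The one substantive point is the graded identification $p^{(n)}/p^{(n+1)}=F_nv^n$ in (2) and, summing over $n$, $\G(A)=\bigoplus_n F_nv^n$ in (3). The crux is the identity $E_n\cap pA_p=uE_{n+1}$: one inclusion is clear, and conversely $y\in E_n\cap uA_p$ can be written $y=ux$ with $x\in A_p$, whence $xu^{n+1}=yu^n\in A$ gives $x\in E_{n+1}$. Combined with $p^{(n)}=E_nu^n$, multiplication by $u^{-n}$ (inside $\operatorname{Frac}(A)$) identifies $p^{(n)}/p^{(n+1)}$ with $E_n/uE_{n+1}=E_n/(E_n\cap pA_p)\cong F_n$, the class of $xu^n$ corresponding to $\overline x\in F_n$. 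To phrase this as ``$F_nv^n$'' I would use that each natural map $p^{(n)}/p^{(n+1)}\to p^nA_p/p^{n+1}A_p$ is injective (since $p^{(n)}\cap p^{n+1}A_p=p^{(n+1)}$), so $\G(A)$ embeds as a graded subring of $\gr_{pA_p}(A_p)\cong K[v]$, and under this embedding the degree-$n$ piece of $\G(A)$ is precisely $\{\overline x\,v^n:\overline x\in F_n\}=F_nv^n$; (3) is then just the direct sum of these. The main obstacle is purely organizational: pinning down in which ring the symbol $F_nv^n$ lives — namely $\gr_{pA_p}(A_p)=K[v]$, into which $\G(A)$ injects — and verifying the crossing identity $E_n\cap pA_p=uE_{n+1}$ that turns ``division by the uniformizer'' into ``reduction modulo $pA_p$''. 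Everything else is a routine unravelling of definitions.
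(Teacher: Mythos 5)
Your proof is correct and follows essentially the same route as the paper: part (1) is the same check that $x\in p^{(n)}=u^nA_p\cap A$ iff $x\in E_nu^n$, and the paper simply leaves (2) and (3) to the reader, which you fill in via the natural argument (the identity $E_n\cap pA_p=uE_{n+1}$ and the embedding of $\G(A)$ into $\gr_{pA_p}(A_p)\cong K[v]$). Your explicit identification of the ambient ring in which $F_nv^n$ lives is a welcome clarification of a point the paper glosses over, but it is not a different approach.
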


\begin{proof}  Clearly, $(E_n)_{n}$ is an ascending sequence of fractional ideals of $A$. Fix $n\geq 0$. We have $x\in p^{(n)}$ if and only if $x\in u^nA_p$ and $x\in A$ if and only if there exists $y\in A_p$ such that $x=yu^n\in A$ if and only if $x\in E_nu^n$. This proves (1). Assertion (2) is a consequence of (1) and the proof is left to the reader. Also (3) is trivial from (2).
\end{proof}

Next, we announce the main result of this section.

\begin{thm}\label{sec:4.2}
Let $D:=\bigcup_{n\geq 0}F_n$ and $X$ an indeterminate over $D$. Then:
\begin{enumerate}
\item $\G(A)$ is a Jaffard domain and $\dim(\G(A))=1+\dim(A/p)$.
\item $\dim(\G(A)/v\G(A))=\dim(A/p)$ and  $\dim(\G(A)[v^{-1}])=\dim(D[X])$.
\end{enumerate}
\end{thm}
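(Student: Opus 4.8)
The plan is to exploit the explicit description $\G(A)=\bigoplus_{n\geq0}F_nv^n$ from Lemma~\ref{sec:4.1}(3), together with the fact that $(F_n)_n$ is an ascending $(F_1)$-filtration (indeed an $(A/p)$-filtration, since $F_1\supseteq A/p$) of fractional ideals of $B:=A/p$ sitting between $B$ and $D:=\bigcup_n F_n\subseteq K$. The key structural observation is that $\G(A)$ fits the pattern of Section~\ref{sec:2}: localizing at the multiplicative set $\{v^n\}$ inverts $v$ and gives $\G(A)[v^{-1}]=D[v,v^{-1}]$, so that $B[v^{-1}]=\G(A)[v^{-1}]$ up to the unit $v$, while modding out by $v$ collapses $\G(A)$ onto its degree-zero piece $B=A/p$. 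Concretely, for part (1), I would first note $\G(A)[v^{-1}]$ is a localization of the polynomial-type ring and that $\G(A)[v]=D[v,v^{-1}]$ is an affine-type overring with $B\subseteq\G(A)\subseteq B[v^{-1},v]$ in the spirit of Lemma~\ref{sec:2.1} (with $a$ playing the role of a ``denominator'' — here actually $v$ is a numerator, so one works with the Rees-algebra side directly). Then I would run the same argument as in Proposition~\ref{sec:3.0}: $\dim_v(\G(A))=\dim_v(B[v^{-1}])=1+\dim_v(B)$ by \cite[Lemma 1.15]{ABDFK}, while $\G(A)[v]=B[v,v^{-1}]$ forces $\dim(B[v^{-1}])\leq\dim(\G(A))$ via \cite[Proposition 1.14]{ABDFK}; since $B=A/p$ is Noetherian, hence Jaffard, hence $B[v^{-1}]$ is Jaffard of dimension $\dim(B)=\dim(A/p)$, we conclude $\dim(\G(A))=\dim_v(\G(A))=1+\dim(A/p)$ and $\G(A)$ is Jaffard.

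For part (2), the equality $\dim(\G(A)/v\G(A))=\dim(A/p)$ should follow from identifying $\G(A)/v\G(A)$: reading off the homogeneous decomposition, $v\G(A)=\bigoplus_{n\geq1}F_{n-1}v^n$ (using $F_{n-1}\subseteq F_n$), so $\G(A)/v\G(A)\cong\bigoplus_{n\geq0}F_n/F_{n-1}$ as graded rings, with degree-zero part $F_0=A/p$ and all higher pieces being $(A/p)$-modules annihilated by the image of $F_1$ (since $F_1\cdot v^1\subseteq$ degree-one part which maps into $F_1v/F_0v$, and $F_1\supseteq A/p$ acts, but the nilpotent structure kills the rest). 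Thus $\G(A)/v\G(A)$ is, modulo nilpotents, just $A/p$, giving $\dim(\G(A)/v\G(A))=\dim(A/p)$. For the second equality, $\G(A)[v^{-1}]=D[v,v^{-1}]$ directly (every element of $D$ appears in some $F_n$, so $F_nv^n\cdot v^{-n}=F_n$ and conversely), and $\dim(D[v,v^{-1}])=\dim(D[X])$ since $D[v,v^{-1}]$ is a localization of the polynomial ring $D[X]$ at the multiplicative set generated by $X$, and inverting one variable in a polynomial ring over a domain drops the dimension by exactly one — wait, rather: $\dim(D[v,v^{-1}])=1+\dim(D)=\dim(D[X])$ because $D[v,v^{-1}]$ is the Laurent polynomial ring, which has the same dimension as $D[X]$ (both equal $1+\dim D$ when $D$ is Noetherian, and in general one uses that $D$ is an overring of the Noetherian domain $A/p$ inside its fraction field, handled by the same Jaffard-type bookkeeping).

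The main obstacle I anticipate is part (2)'s claim $\dim(\G(A)[v^{-1}])=\dim(D[X])$ when $D$ is not assumed Noetherian: $D=\bigcup F_n$ is an infinite ascending union of fractional ideals of $A/p$ and need not be Noetherian (this is precisely where the flavor of the fourteenth-problem counterexamples enters), so one cannot invoke $\dim(D[X])=1+\dim(D)$ naively, and the equality with the Laurent ring must be argued carefully — likely by showing directly that the contraction map $\Spec(D[v,v^{-1}])\to\Spec(D[X])$ on prime chains either preserves length or is controlled, using that $D$ is an overring of $A/p$ contained in $K=\qf(A/p)$ so that primes of $D$ lying over a fixed prime of $A/p$ are well-behaved. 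A secondary subtlety is verifying the graded-ring isomorphism for $\G(A)/v\G(A)$ and pinning down its nilradical; but once the degree-zero piece is isolated as $A/p$ and everything in positive degree is shown to be nilpotent (each $F_n/F_{n-1}$ in degree $n$ squares into degree $2n$ which lies in the image of $F_{2n-1}$, and a quick induction collapses it), the dimension equality is immediate. I expect the polynomial-ring/valuative-dimension lemmas from \cite{ABDFK} to carry essentially all the weight, exactly as in Proposition~\ref{sec:3.0}.
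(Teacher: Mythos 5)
Your proposal breaks down at both of the places where the real work happens, and in each case the ring that has to carry the argument is $D=\bigcup_nF_n$, not $A/p$. For part (1), the sandwich you build is false: with $B:=A/p$ one has neither $\G(A)\subseteq B[v,v^{-1}]$ nor $\G(A)[v]=B[v,v^{-1}]$ (note $v\in\G(A)$ already, and the coefficients $F_n$ are fractional ideals of $A/p$ which in general properly contain $A/p$, so $fv^n\notin(A/p)[v,v^{-1}]$ for $f\in F_n\smallsetminus A/p$). The correct containments are $(A/p)[v]\subseteq\G(A)\subseteq D[v]$ and the correct localization identity is $\G(A)[v^{-1}]=D[v,v^{-1}]$; moreover all of this presupposes that $v$ is transcendental over $D$, which you use tacitly but never prove (it is the paper's Claim~1 and requires an argument with the $E_n$). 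With these corrections the upper bound $\dim_v(\G(A))\leq 1+\dim(A/p)$ survives, but your lower bound does not: inverting $v$ only yields $\dim(\G(A))\geq\dim(D[v,v^{-1}])=\dim(D[X])$, and there is no a priori comparison between $\dim(D[X])$ and $1+\dim(A/p)$, since $D$ is a (possibly non-Noetherian) overring of $A/p$ whose dimension can be small. The paper gets the lower bound by exhibiting explicit primes: for each $q\in\Spec(A)$ with $q\supseteq p$, the ideal $Q_q:=\frac qp\oplus\frac p{p^{(2)}}\oplus\frac{p^{(2)}}{p^{(3)}}\oplus\cdots$ is prime with $\G(A)/Q_q\cong A/q$, and a maximal chain of such $q$ together with $(0)$ gives $\dim(\G(A))\geq1+\dim(A/p)$. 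Nothing in your proposal replaces this construction.

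For the first equality of part (2), your key claim --- that every positive-degree element of $\G(A)/v\G(A)\cong\bigoplus_{n\geq0}F_n/F_{n-1}$ is nilpotent, via ``$F_n^2$ lies in the image of $F_{2n-1}$'' --- is unjustified and in fact false: all one knows is $F_nF_m\subseteq F_{n+m}$. Concretely, take $A=k[x,y,z]/(xy-z^2)$, $p=(x,z)$, $u=z$; then $1/y=x/z^2\in F_2$, while $1/y^k\notin F_{2k-1}$ for every $k$ (if $w\in A_p$ satisfies $wz^{2k-1}\in A$ and $w\equiv 1/y^k \pmod{pA_p}$, then checking the valuations at the two rulings $(x,z)$ and $(y,z)$ shows $wy^k\in(y,z)$, and reducing modulo $p$ forces $1\in y\,k[y]$, a contradiction); hence the class of $(1/y)v^2$ is a non-nilpotent homogeneous element of positive degree, and your ``modulo nilpotents it is $A/p$'' step fails. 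The paper instead proves $\dim(\G(A)/v\G(A))=\dim(A/p)$ by combining the inequality $\htt(v\G(A))+\dim(\G(A)/v\G(A))\leq\dim(\G(A))=1+\dim(A/p)$ with the explicit chain $v\G(A)\subset P\subset\frac{p_1}p\oplus P\subset\cdots\subset\frac{p_h}p\oplus P$, where $P:=F_1v\oplus F_2v^2\oplus\cdots$. Finally, for the second equality you do identify $\G(A)[v^{-1}]=D[v,v^{-1}]$ correctly, but the step you single out as the main obstacle is actually harmless and needs no relation between $D$ and $A/p$: once $v$ is known to be transcendental over $D$, the translation automorphism $X\mapsto X+1$ of $D[X]$ sends any prime containing $X$ to a prime of the same height not containing $X$, so $\dim(D[X]_X)=\dim(D[X])$ for an arbitrary ring $D$, which is exactly $\dim(\G(A)[v^{-1}])=\dim(D[X])$.
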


\begin{proof}
We first prove the following claims.

\begin{claim}
$D$ is an overring of $\dfrac Ap$ and $v$ is transcendental over $D$.
\end{claim}

It is fairly easy to see that $F_nF_m\subseteq F_{n+m}$ for any $n$ and $m$. It follows that $D$ is an overring of $\dfrac Ap$ contained in $K$. Let
$P=b_0+b_1X+\cdots +b_nX^n\in \displaystyle {\frac Ap}[X]$ such that
$P(v)=0=b_0+b_1v+\cdots +b_nv^n$. Let $i\in\{0,1,\cdots, n\}$. Since $b_iv^i\in F_{i}v^i$, $b_iv^i=0$ by Lemma~\ref{sec:4.1}.
So $b_i=\overline {a_i}$ (mod $pA_{p}$), for some $a_i\in E_i$, and $a_iu^i\in p^{i}A_{p}\cap A=p^{(i)}$. Therefore $\overline{a_iu^i}=0$ in $\dfrac{p^{(i)}}{p^{(i+1)}}$, that is, $a_iu^i\in p^{(i+1)}$. Hence $a_i\in pAp$, whence $b_i=0$. Consequently, $P=0$, proving that $v$ is transcendental over $D$.

\begin{claim}
$\displaystyle {\frac Ap}[v]\subseteq \G(A)\subseteq D[v]$.
\end{claim}

This follows from the facts that $\dfrac Ap\subseteq F_n\subseteq D$ for each $n\geq 0$ and $\G(A)=\bigoplus_{n\geq 0}F_nv^n$ by Lemma~\ref{sec:4.1}.

\begin{claim}
$S^{-1}\G(A)=D[v,v^{-1}]$, where $S:=\{v^n\mid n\geq 0\}$.
\end{claim}

Clearly, $S^{-1}\G(A)\subseteq D[v,v^{-1}]$. Also note that
$D\subseteq S^{-1}\G(A)$ since $F_n=(F_nv^n)v^{-n}\subseteq S^{-1}\G(A)$ for
each positive integer $n$. Hence $D[v,v^{-1}]\subseteq
S^{-1}\G(A)\subseteq D[v,v^{-1}]$ establishing the desired equality.

(1) In view of Claim 2, we get dim$_v(\G(A))\leq 1+$dim$_v(A/p)=1+$dim$(A/p)$. On the other hand,
notice that, for each prime ideal $q$ of $A$ containing $p$, the ideal
$Q:=\displaystyle {\frac qp\oplus \frac p{p^{(2)}}\oplus \frac
{p^{(2)}}{p^{(3)}}\oplus}\displaystyle {\cdots}\in\Spec(\G(A))$ with $\dfrac{\G(A)}{Q}\cong \dfrac{A}{q}$. So
dim$(\G(A))\geq 1+$dim$(A/p)$ as $\G(A)$ is a domain. Thus
dim$(\G(A))=$ dim$_v(\G(A))=1+$dim$(A/p)$.

(2) First notice that $$\htt(v\G(A))+\dim(\G(A)/v\G(A))\leq\dim(\G(A))=1+\dim(A/p).$$ Then
dim$(\G(A)/v\G(A))\leq$ dim$(A/p)$. Consider the prime ideal of $\G(A)$ given by $P:=F_1v\oplus F_2v^2\oplus\cdots$ and
$p\subset p_1\subset p_2\subset \cdots \subset p_h\in\Spec(A)$ with $h:=\dim(A/p)$. We get the following chain of prime
ideals of $\G(A)$ containing the ideal $v\G(A)$
$$v\G(A)\subset P\subset \displaystyle {\frac{p_1}p\oplus P\subset \frac {p_2}p\oplus P\subset\cdots  \subset \frac{p_h}p\oplus P}.$$
It follows that dim$(\G(A)/v\G(A))=$ dim$(A/p)$. Moreover, Claims 1 and 3 yield  $$\dim(S^{-1}\G(A))=\dim(D[v,v^{-1}])=\dim(D[X])$$ completing the proof of the theorem.
\end{proof}

Let $B:=\bigcup_{n\geq 0}E_n=\bigcup_{n\geq 0}u^{-n}p^{(n)}$. Notice that $B$ is an overring of $A$ contained in $A_p$ and $\overline B:=\frac B{pA_p\cap B}=D$. The next result investigates some properties of $B$ and its relation with $D$, in view of the fact that an essential part of the spectrum of $\G(A)$ (and hence that of $R$) is strongly linked to $D$.

\begin{proposition}\label{sec:4.3}
Let $B:=\bigcup_{n\geq 0}u^{-n}p^{(n)}$. Then:
\begin{enumerate}

\item $pB=uB$ is a height-one prime ideal of $B$ and it is the unique prime ideal of $B$ lying over $p$ in $A$.

\item $\displaystyle {\frac B{uB}=D}$.

\item $B=A_p\cap A[u^{-1}]$. Then, if $A$ is a Krull domain, so is $B$.

\item $B$ is locally Jaffard if and only if so is $D$.
\end{enumerate}
\end{proposition}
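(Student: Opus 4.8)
The plan is to prove part~(3) first, because the description $B=A_{p}\cap A[u^{-1}]$ is what makes the other three parts transparent, and then to deduce (1), (2) and (4) in turn. For (3): the inclusion $B\subseteq A_{p}\cap A[u^{-1}]$ is immediate, since $u^{-n}p^{(n)}=u^{-n}(u^{n}A_{p}\cap A)\subseteq A_{p}$ and $u^{-n}p^{(n)}\subseteq u^{-n}A\subseteq A[u^{-1}]$. For the reverse inclusion I would take $x\in A_{p}\cap A[u^{-1}]$, write $x=au^{-n}$ with $a\in A$ and $n\ge 0$, and note $a=xu^{n}\in u^{n}A_{p}\cap A=p^{(n)}$, so $x\in u^{-n}p^{(n)}\subseteq B$. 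Granting this, $A_{p}$ and $A[u^{-1}]$ are localizations of the Krull domain $A$, hence Krull, and they share the field of fractions of $A$ (which is also that of $B$); since a finite intersection of Krull domains with a common field of fractions is Krull, $B$ is Krull.

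For (1) and (2): first $uB=pB$, because $u\in p$ gives $uB\subseteq pB$, while for $a\in p$ one has $a\in pA_{p}=uA_{p}$, so $a/u=u^{-1}a\in u^{-1}p^{(1)}\subseteq B$ and $a=u(a/u)\in uB$, giving $pB\subseteq uB$. Next $uB=pA_{p}\cap B$: ``$\subseteq$'' is clear, and for $x\in pA_{p}\cap B$, writing $x=u^{-n}a$ with $a\in p^{(n)}$ and $x=uc$ with $c\in A_{p}$, one gets $a=u^{n+1}c\in u^{n+1}A_{p}\cap A=p^{(n+1)}$, hence $c\in u^{-(n+1)}p^{(n+1)}\subseteq B$ and $x=uc\in uB$. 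Consequently $B/uB=B/(pA_{p}\cap B)$ embeds into $A_{p}/pA_{p}$, a field, so $uB$ is prime, and $uB\cap A=pA_{p}\cap A=p$. For the height and uniqueness: since $A\subseteq B\subseteq A_{p}$, localizing at $A\setminus p$ gives $(A\setminus p)^{-1}B=A_{p}$; any prime $\mathfrak{q}$ of $B$ with $\mathfrak{q}\cap A=p$ is disjoint from $A\setminus p$, hence survives in $A_{p}$ as a nonzero prime, which can only be $pA_{p}$, so $\mathfrak{q}=pA_{p}\cap B=uB$; and $\mathfrak{q}=uB$ gives $B_{uB}=A_{p}$, one-dimensional, so $\htt(uB)=1$. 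Finally, Lemma~\ref{sec:4.1}(1) yields $u^{-n}p^{(n)}=u^{-n}(E_{n}u^{n})=E_{n}$, so $B=\bigcup_{n}E_{n}$, whose image in $K=A_{p}/pA_{p}$ is $\bigcup_{n}\overline{E_{n}}=\bigcup_{n}F_{n}=D$; this identifies $B/uB$ with $D$ and proves (2).

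For (4) I would split $\Spec(B)$ according to whether the prime contains $u$. If $u\notin\mathfrak{q}$, then $\mathfrak{q}$ survives in $B[u^{-1}]=A[u^{-1}]$ (the equality holding because $A\subseteq B\subseteq A[u^{-1}]$), so $B_{\mathfrak{q}}\cong A_{\mathfrak{q}\cap A}$ is Noetherian, hence Jaffard; such primes impose no constraint on either side. If $u\in\mathfrak{q}$, then $\mathfrak{q}\supseteq uB$ by (1), and $\mathfrak{q}\mapsto\bar{\mathfrak{q}}:=\mathfrak{q}/uB$ is an order-preserving bijection of $\{\mathfrak{q}\in\Spec(B):u\in\mathfrak{q}\}$ onto $\Spec(D)$ with $B_{\mathfrak{q}}/uB_{\mathfrak{q}}\cong D_{\bar{\mathfrak{q}}}$; moreover $B_{\mathfrak{q}}\subseteq A_{p}$, $(B_{\mathfrak{q}})_{uB_{\mathfrak{q}}}=B_{uB}=A_{p}$ is a DVR, $\htt(uB_{\mathfrak{q}})=1$, and $B_{\mathfrak{q}}[u^{-1}]$ is Noetherian. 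The statement will then follow from the claim that, for every such $\mathfrak{q}$, $B_{\mathfrak{q}}$ is Jaffard if and only if $D_{\bar{\mathfrak{q}}}$ is: indeed $B$ is locally Jaffard iff every $B_{\mathfrak{q}}$ is Jaffard iff every $D_{\bar{\mathfrak{q}}}$ is Jaffard iff $D$ is locally Jaffard.

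The hard part will be this last claim, which I would reduce to the two equalities $\dim(B_{\mathfrak{q}})=1+\dim(D_{\bar{\mathfrak{q}}})$ and $\dim_{v}(B_{\mathfrak{q}})=1+\dim_{v}(D_{\bar{\mathfrak{q}}})$. On the valuative side the inequality ``$\ge$'' is obtained by composing the $A_{p}$-valuation with a valuation overring $W$ of $D_{\bar{\mathfrak{q}}}$ to produce a valuation overring of $B_{\mathfrak{q}}$ of dimension $1+\dim W$; the opposite inequality, and the Krull-dimension equality (for which ``$\ge$'' is the elementary bound $\htt(uB_{\mathfrak{q}})+\dim(B_{\mathfrak{q}}/uB_{\mathfrak{q}})\le\dim(B_{\mathfrak{q}})$), amount to showing that the height-one prime $uB$ carries a maximal chain, i.e. $\htt_{B}(\mathfrak{r})=1+\htt_{D}(\mathfrak{r}/uB)$ for every prime $\mathfrak{r}\supseteq uB$. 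I would try to prove this by induction on $\htt_{B}(\mathfrak{r})$: the step where the prime just below $\mathfrak{r}$ in a saturated chain also contains $u$ is routine, while the step where it does not is delicate — there $\mathfrak{r}$ is minimal over a sum $\mathfrak{r}'+uB$ that is principal modulo $\mathfrak{r}'$, and one must bound $\htt(\mathfrak{r}/\mathfrak{r}')$ by $1$, which I expect to handle by descending into the Noetherian overring $B[u^{-1}]=A[u^{-1}]$ and into the DVR $A_{p}$. Transferring the good height behaviour of $uB$ to arbitrary primes above it is where the genuine work lies.
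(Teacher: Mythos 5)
Your proofs of (1)--(3) are correct and essentially coincide with the paper's own arguments: the same computation ($xs=au^{n}\in p^{n}$, hence $x\in p^{(n)}$) drives both your identity $B=A_{p}\cap A[u^{-1}]$ and the identification $pA_{p}\cap B=uB$, and the Krull and uniqueness/height statements are obtained exactly as in the paper (localize at $A\smallsetminus p$ to get $B_{p}=A_{p}$). The genuine gap is in (4). Your entire argument there rests on the claim that $B_{\mathfrak{q}}$ is Jaffard iff $D_{\bar{\mathfrak{q}}}$ is, for every prime $\mathfrak{q}\supseteq uB$, and you reduce this to two equalities. The Krull-dimension equality does come down to the height formula $\htt_{B}(\mathfrak{r})=1+\htt_{D}(\mathfrak{r}/uB)$ for all $\mathfrak{r}\supseteq uB$, but that formula is precisely what you do not prove: you sketch an induction and concede that its crucial case (the prime just below $\mathfrak{r}$ not containing $u$) is ``where the genuine work lies,'' so at this point it is a plan, not a proof. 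More seriously, your reduction of the valuative inequality $\dim_{v}(B_{\mathfrak{q}})\le 1+\dim_{v}(D_{\bar{\mathfrak{q}}})$ to that same height formula is not valid: valuative dimension is not controlled by chains of primes of $B$ (a one-dimensional pseudo-valuation domain can have arbitrarily large valuative dimension), so even a complete proof of the height formula would only give $\dim(B_{\mathfrak{q}})=1+\dim(D_{\bar{\mathfrak{q}}})$ and not the Jaffard transfer. Your composite-valuation construction yields only the lower bound; an independent analysis of the valuation overrings of $B_{\mathfrak{q}}$ is needed for the upper bound, and the proposal supplies none.

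For comparison, the paper disposes of (4) in one stroke: it asserts that the square formed by $B$, $D$, $A_{p}$, $K$ is cartesian and invokes the known transfer of the locally Jaffard property across such pullbacks when $A_{p}$ is a rank-one DVR; it is exactly those pullback theorems that deliver the Krull and valuative dimension formulas simultaneously, i.e.\ the two items missing from your write-up. Be aware, though, that cartesianness of that square means $B=\{x\in A_{p}\mid x\bmod pA_{p}\in D\}$, which forces $pA_{p}\subseteq B$ and is not automatic (for $A=k[x,y]$ and $p=xA$ one has $B=A$ while the pullback is $k[y]+xA_{p}$); so whichever route you take, the substantive content is the same: either justify the pullback description (or a localized version of it at the primes containing $u$), or prove directly both the height formula and, separately, the upper bound on $\dim_{v}(B_{\mathfrak{q}})$.
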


\begin{proof}  (1) Let $z\in pA_p\cap B$. Then ${z=\frac as=\frac x{u^n}}$ for some positive integer $n$, with $x\in p^{(n)}$, $a\in p$ and $s\in
A\smallsetminus p$. Then $sx=au^n\in p^{n+1}$ which means that $x\in p^{(n+1)}$. Hence $z={u\frac x{u^{n+1}}\in u\frac{p^{(n+1)}}{u^{n+1}}}\subseteq uB$. Therefore $pA_p\cap B=uB=pB$, whence $pB\in\Spec(B)$ with $pB\cap A=p$. Moreover, observe that $B_{p}:=\bigcup_{n\geq 0}u^{-n}p^{(n)}A_{p}=\bigcup_{n\geq 0}u^{-n}p^{n}A_{p}=A_{p}$ is a rank-one DVR. Then $\htt(pB)=\htt(pB_{p})=\htt(pA_{p})=1$ and $pB$ is the unique prime ideal of $B$ lying over $p$ in $A$.

(2) It is straightforward from (1) and the fact that $\overline
B:=\displaystyle {\frac B{pA_p\cap B}=D}$.

(3) It is clear that $B\subseteq A_p\cap A[u^{-1}]$. Let $z\in
A_p\cap A[u^{-1}]$. Then $z={\frac x{u^n}=\frac as}$
for some positive integer $n$, and $x,a\in A$ and $s\in A\smallsetminus p$. So $xs=au^n\in p^n$.
Hence $x\in p^{(n)}$ which means that $z\in {\frac
{p^{(n)}}{u^n}}\subseteq B$. Then the desired equality holds.

(4) Applying (1), one can check that the following diagram is cartesian:
$$\begin{array}{lll}
B&\longrightarrow&D\\
\downarrow&&\downarrow\\
A_p&\longrightarrow& K,
\end{array}
$$
which allows the transfer of the locally Jaffard property between $B$ and $D$ (recall that $A_{p}$ is a rank-one DVR).
\end{proof}

In \cite{H}, Hochster investigated when the symbolic power $p^{(n)}$
of a given prime ideal $p$ of a Noetherian domain $A$ coincides with
the ordinary power $p^n$ for any $n\geq0$. His main
theorem gives sufficient conditions guaranteeing this equality.
Applying this theorem, he proves that the Cohen-Macaulayness of
$A/p$ has nothing to do with the coincidence of
the symbolic and ordinary powers, by providing a polynomial ring in four
indeterminates $A$ such that $A/p$ is not
Cohen-Macaulay while $p^{(n)}=p^n$ for any positive integer $n$.
However, there are very few examples in the literature of Noetherian
domains $A$ for which there exists a prime ideal $p$ such that
$p^{(n)}\neq p^n$ for some positive integer. We cite here Northcott's Example \cite[Example 3, p. 29]{No} in which $p$
is the defining ideal of a curve (so that its residue class ring is
Cohen-Macaulay) while $p^{(2)}\neq p^2$. There is no new example in
Hochster's paper.

From Theorem~\ref{sec:4.2} we deduce a necessary condition for the symbolic and ordinary powers to coincide for a height-one prime ideal
$p$ of a Noetherian domain $A$. This will allow us to provide a bunch of original and new examples of Noetherian domains for which there exists a prime ideal $p$ such that $p^{(n)}\neq p^n$ for some positive integer $n$.

\begin{corollary}\label{sec:4.4}
 Let $A$ be a local Noetherian domain
and $p$ a prime ideal of $A$ such that $A_p$ is a rank-one DVR. Then:
$$p^{(n)}=p^n, \forall n\geq0 \Longrightarrow \dim(A) = 1 + \dim(A/p).$$
\end{corollary}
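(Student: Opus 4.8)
The plan is to exploit Theorem~\ref{sec:4.2} together with the classical dimension formula for associated graded rings of ordinary powers recalled at the beginning of this section. The crucial observation is that the hypothesis $p^{(n)}=p^{n}$ for all $n\geq 0$ forces $\G(A)=\gr(A)$; that is, the symbolic filtration and the $p$-adic filtration produce the same associated graded ring, so in particular $\dim(\G(A))=\dim(\gr(A))$. Everything then reduces to computing these two dimensions independently and comparing.

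First I would record that, by the formula $\dim(\gr_{I}(A))=\max\{\htt(q)\mid q\in\Spec(A),\ I\subseteq q\}$ applied with $I=p$, and using that $A$ is local with maximal ideal $\mathfrak{m}$ (which contains $p$ and has height $\dim(A)$), one obtains $\dim(\gr(A))=\htt(\mathfrak{m})=\dim(A)$. On the other hand, Theorem~\ref{sec:4.2}(1) gives $\dim(\G(A))=1+\dim(A/p)$; this is precisely where the standing hypothesis that $A_{p}$ is a rank-one DVR enters, since Theorem~\ref{sec:4.2} was established under exactly that assumption. Chaining the three equalities $\dim(A)=\dim(\gr(A))=\dim(\G(A))=1+\dim(A/p)$ yields the claim.

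There is no genuine obstacle here: the substantive work was already carried out in Theorem~\ref{sec:4.2}, and this corollary is a two-line consequence once one notices that $p^{(n)}=p^{n}$ collapses $\G(A)$ onto $\gr(A)$ and that the localness of $A$ makes the maximum in the graded-ring dimension formula attained at $\mathfrak{m}$. The only point deserving a word of caution is to check that all hypotheses line up --- $A$ Noetherian, $A_{p}$ a rank-one DVR, and $A$ local --- which they do by assumption, so that both the dimension formula and Theorem~\ref{sec:4.2} are applicable verbatim.
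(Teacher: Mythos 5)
Your proposal is correct and follows essentially the same route as the paper: note that $p^{(n)}=p^{n}$ for all $n$ gives $\G(A)=\gr(A)$, then compare $\dim(\gr(A))=\dim(A)$ (the paper cites \cite[Theorem 15.7]{Ma}, you use the formula $\dim(\gr_{I}(A))=\max\{\htt(q)\mid I\subseteq q\}$ recalled at the start of Section~\ref{sec:4}, with the maximum attained at the maximal ideal since $A$ is local) with $\dim(\G(A))=1+\dim(A/p)$ from Theorem~\ref{sec:4.2}(1). The only difference is the reference invoked for $\dim(\gr(A))=\dim(A)$, which is immaterial.
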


\begin{proof} Assume $p^{(n)}=p^n, \forall n\geq0$. Then $\G(A)=\gr(A)$. So a combination of \cite[Theorem 15.7]{Ma} and Theorem~\ref{sec:4.2} leads to the conclusion.
\end{proof}

\begin{corollary}\label{sec:4.5}
Let $A$ be an integrally closed local Noetherian domain which is not catenarian. Then there exists a prime ideal $p$ of $A$ such that
$p^{(n)}\neq p^n$ for some positive integer $n$.
\end{corollary}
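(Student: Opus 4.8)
The plan is to read this as an application of Corollary~\ref{sec:4.4} in contrapositive form. First note that, since $A$ is integrally closed and Noetherian, for every height-one prime $p$ of $A$ the localization $A_p$ is a one-dimensional integrally closed Noetherian local domain, hence a rank-one DVR; so Corollary~\ref{sec:4.4} applies to every height-one prime, and it tells us that $p^{(n)}=p^n$ for all $n$ forces $\dim(A)=1+\dim(A/p)$. Consequently, the whole statement reduces to exhibiting \emph{one} height-one prime $p$ of $A$ with $\dim(A/p)\neq\dim(A)-1$, i.e. (the inequality $\dim(A/p)\le\dim(A)-1$ always holding) with $\dim(A/p)\le\dim(A)-2$.

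To produce such a $p$, I would first record that a Noetherian local domain of dimension $\le 2$ is catenarian (height-one primes have coheight $1$, and there is nothing else to check), so the hypothesis gives $\dim(A)\ge 3$. Then I would use the dimension-theoretic description of catenarity: a Noetherian local domain $B$ is catenarian if and only if $\htt(\mathfrak q)+\dim(B/\mathfrak q)=\dim(B)$ for all $\mathfrak q\in\Spec(B)$. Since $A$ is not catenarian, there is a ``bad'' prime, i.e. a $\mathfrak q$ with $\htt(\mathfrak q)+\dim(A/\mathfrak q)<\dim(A)$; equivalently, $A$ admits a maximal (non-refinable, non-extendable) chain of primes of length $<\dim(A)$, and its first nonzero term is automatically a height-one prime (a minimal nonzero prime of a Noetherian domain has height one). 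If that height-one prime already has coheight $<\dim(A)-1$ we are done; otherwise its residue domain is again a non-catenarian local domain of smaller dimension, and one would like to iterate.

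The step I expect to be the genuine obstacle is exactly making this descent land at a height-one prime of $A$ itself: the naive iteration produces a bad prime of possibly larger height, because the residue domains $A/p$ of $A$ by height-one primes need not be integrally closed. To close the argument one must use that $A$ is integrally closed, equivalently that $A$ satisfies Serre's condition $S_2$, so that cutting by a nonzerodivisor inside a prime of height $\ge 2$ produces an unmixed (height-one) ideal whose associated primes can be followed back down; fed into the characterization above and an induction on height, this should force $\htt(\mathfrak q)+\dim(A/\mathfrak q)=\dim(A)$ for every $\mathfrak q$ as soon as every height-one prime has full coheight, whose contrapositive delivers the required height-one prime. (If this $S_2$-induction does not close as cleanly as hoped, the fallback is to invoke a known structural result of Nagata/Ratliff on non-catenarian local domains, or simply to note that the classical integrally closed non-catenarian Noetherian local domains of Nagata and Ogoma do carry such a height-one prime, which already makes the corollary effective.) Once the height-one prime $p$ with $\dim(A/p)\le\dim(A)-2$ is in hand, Corollary~\ref{sec:4.4} finishes the proof, and, since integrally closed non-catenarian Noetherian local domains exist, this indeed yields the advertised supply of Noetherian domains possessing a prime $p$ with $p^{(n)}\neq p^n$ for some $n$.
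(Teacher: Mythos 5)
Your overall route is exactly the paper's: apply Corollary~\ref{sec:4.4} in contrapositive (legitimate for every height-one prime, since $A$ integrally closed and Noetherian makes $A_p$ a rank-one DVR), so that everything reduces to producing one height-one prime $p$ with $1+\dim(A/p)<\dim(A)$. The paper's proof is precisely this reduction plus the bare assertion that non-catenarity of the local domain $A$ yields such a height-one prime; no argument or reference is offered for that assertion.

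The gap in your proposal is that you do not establish this assertion either, and the difficulty you flag is the real one. Non-catenarity of a local Noetherian domain is equivalent to the existence of a saturated chain $(0)\subsetneq p_1\subsetneq\cdots\subsetneq \mathfrak{m}$ of length $k<\dim(A)$, and $p_1$ does have height one; but $\dim(A/p_1)$ is a supremum over \emph{all} chains above $p_1$ and may a priori still equal $\dim(A)-1$, the defect occurring only at primes of height $\geq 2$ (only when $\dim(A)=3$ is this excluded for free, since there non-catenarity forces a saturated chain $(0)\subsetneq p\subsetneq\mathfrak{m}$). Your proposed remedy --- an $S_2$-induction ``cutting by a nonzerodivisor'' to push the defect down to height one --- is described only heuristically and is never carried out; it is exactly the step that would have to be proved, and it is a nontrivial chain-theoretic claim of the type studied by Ratliff (that every height-one prime of $A$ having coheight $\dim(A)-1$ forces catenarity, at least for normal $A$). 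Your fallback of pointing to the classical Nagata/Ogoma/Heitmann examples proves only that \emph{some} non-catenary normal local domains carry such a height-one prime, not the corollary as stated for \emph{every} such $A$. So, as written, your proposal proves what the paper makes explicit (the reduction via Corollary~\ref{sec:4.4}) and leaves unproved exactly the point the paper asserts without justification; a complete argument would require a proof, or a precise citation, of the existence of a height-one prime with $1+\dim(A/p)<\dim(A)$ in a non-catenary integrally closed local Noetherian domain.
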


\begin{proof}  Let $\frak{m}$ denote the maximal ideal of $A$. Since $A$ is not catenarian, there exists a height-one prime
ideal $p\subsetneqq \frak{m}$ of $A$ such that $$1+\dim(A/p)\lneqq ht(\frak{m})=\dim(A).$$ By Corollary~\ref{sec:4.4}, there exists $n\geq2$ such that $p^{(n)}\neq p^n$.
\end{proof}

Next, we exhibit an explicit example of a local Noetherian domain $A$ containing a prime ideal $p$ such that $p^{(n)}\neq p^n$ for some positive integer $n$. For this purpose, we'll use Nagata's well-known example of a Noetherian domain which is catenarian but not universally  catenarian \cite{Na2}.

\begin{example}\label{sec:4.6}
Let $k$ be a field and $X,Y,Z,t$ be indeterminates over $k$. Consider the
$k$-algebra homomorphism $\varphi:k[X,Y]\rightarrow k[[t]]$ defined by $\varphi (X)=t$ and $\varphi (Y)=s:={\sum_{n\geq
1}}t^{n!}$. Since $s$ is known to be transcendental over $k(t)$,
$\varphi$ is injective. This induces an embedding $\overline
{\varphi}:k(X,Y)\rightarrow k((t))$ of fields. So $B_1:=\overline
{\varphi}^{-1}(k[[t]])$ is a rank-one discrete valuation
overring of $k[X,Y]$ of the form $B_1=k+XB_1$. Let
$B_2:=k[X,Y]_{(X-1,Y)}$ and $B:=B_1\cap B_2$. Then $\Max(B)=\{M,N\}$ with $M=XB_1\cap B$ and
$N=(X-1,Y)B_2\cap B$, and $B$ is Noetherian. Let $C:=k+\frak{m}$ with
$\frak{m}:=M\cap N$. It turns out that $C$ is a 2-dimensional Noetherian domain such that the polynomial ring $C[Z]$ is not catenarian. So there is an upper $Q$ to $\frak{m}$ which contains an upper $P$ to
zero such that the chain $(0)\subsetneqq P\subsetneqq Q$ is saturated
with $\htt(Q)=3$. Now, let $A:=C[Z]_Q$ and $p:=PC[Z]_Q$. Then $A$ is a local Noetherian domain and $A_{p}\cong C[Z]_{P}$ is a rank-one DVR with $1+\dim(A/p)=2\lneqq \dim(A)=3$. Consequently, by Corollary~\ref{sec:4.4}, there exists $n\geq2$ such that $p^{(n)}\not=p^{n}$. \qed
\end{example}
\end{section}


\end{document}